\theoremstyle{definition}{
\newtheorem{Def}{{\rm Definition}}
\newtheorem{Ex}{{\rm Example}}
\newtheorem{Rem}{{\rm Remark}}

}
\theoremstyle{plain}
{

\newtheorem{Prop}{Proposition}
\newtheorem{Thm}{Theorem}
\newtheorem{MainThm}{Main Theorem}

}
\begin{document}
\title[Some cohomologically similar manifolds  and special generic maps]{New families of manifolds with similar cohomology rings admitting special generic maps}
\author{Naoki Kitazawa}
\keywords{Special generic maps. (Co)homology rings. Closed and simply-connected manifolds. \\
\indent {\it \textup{2020} Mathematics Subject Classification}: Primary~57R45. Secondary~57R19.}
\address{Institute of Mathematics for Industry, Kyushu University, 744 Motooka, Nishi-ku Fukuoka 819-0395, Japan\\
 TEL (Office): +81-92-802-4402 \\
 FAX (Office): +81-92-802-4405 \\
}
\email{n-kitazawa@imi.kyushu-u.ac.jp}
\urladdr{https://naokikitazawa.github.io/NaokiKitazawa.html}

\begin{abstract}
	As Reeb's theorem shows, Morse functions with exactly two singular points on closed manifolds are very simple and important. They characterize spheres whose dimensions are not $4$ topologically and the $4$-dimensional unit sphere.
	
{\it Special generic} maps are generalized versions of these maps. Canonical projections of unit spheres are special generic. Studies of Saeki and Sakuma since the 1990s, followed by Nishioka and Wrazidlo, show that the differentiable structures of the spheres and the homology groups of the manifolds (in several classes) are restricted. We see special generic maps are attractive.

Our paper studies the cohomology rings of manifolds admitting such maps. As our new result, we find a new family of manifolds whose cohomology rings are similar and find that the (non-)existence of special generic maps are closely related to the topologies. More explicitly, we have previously found related families and our new manifolds add to these discoveries.



\end{abstract}


\maketitle
\section{Introduction.}
\label{sec:1}
  	According to the so-called Reeb's theorem, Morse functions with exactly two singular points on closed manifolds characterize spheres whose dimensions are not $4$ topologically and the $4$-dimensional unit sphere.
  
  {\it Special generic} maps are, in short, their higher dimensional versions. 
  We first define a {\it special} generic map between smooth manifolds with no boundaries.
  Before that, we introduce fundamental and important terminologies, notions and notation. 
  
  Hereafter, for an integer $k>0$, ${\mathbb{R}}^k$ denotes the $k$-dimensional Euclidean space, which is a smooth manifold canonically and also a Riemannian manifold endowed with the standard Euclidean metric.
  For ${\mathbb{R}}^1$, $\mathbb{R}$ is used in a natural way. This is also a commutative ring and $\mathbb{Z} \subset \mathbb{R}$ denotes the subring of all integers.
  $||x|| \geq 0$ is for the distance between $x$ and the origin $0$ in ${\mathbb{R}}^k$. 
  $S^k\ {\rm (}D^{k+1}{\rm )}:=\{x \in {\mathbb{R}}^{k+1} \mid ||x||=1\ {\rm (}resp.\ ||x|| \leq 1{\rm )}\}$ is the $k$-dimensional unit sphere (resp. ($k+1$)-dimensional unit disk). A canonical projection of the unit sphere $S^k \subset {\mathbb{R}}^{k+1}$ into ${\mathbb{R}}^{k^{\prime}}$ with $k \geq k^{\prime}$ is defined as a map mapping $(x_1,x_2) \in {\mathbb{R}}^{k^{\prime}} \times {\mathbb{R}}^{k+1-k^{\prime}}$ to $x_1 \in {\mathbb{R}}^{k^{\prime}}$. This is also a simplest special generic map. 
  
  Hereafter, $\dim X$ is the dimension of a topological space regarded as a CW complex, which we can define uniquely. A manifold is always regarded as a CW complex. A smooth manifold always has the structure of a so-called {\it PL} manifold in a canonical way and this is a polyhedron.
  For a smooth map $c:X \rightarrow Y$ between smooth manifold, a point $p \in X$ is a {\it singular} point of $c$ if the rank of the differential ${dc}_p$ here is smaller than $\min{\dim X,\dim Y}$. $S(c)$ denotes the set of all singular points of $c$ and we call this the {\it singular set} of $c$.
  
  A {\it diffeomorphism} means a homeomorphism which is a smooth map with no singular points.
  Two smooth manifolds are defined to be {\it diffeomorphic} if a diffeomorphism between the two manifolds exists.  
  A smooth manifold homeomorphic to a sphere is said to be a {\it homotopy sphere}. A homotopy sphere is a {\it standard} (an {\it exotic}) sphere if it is diffeomorphic to a unit sphere (resp. not diffeomorphic to any unit sphere).
  
  \begin{Def}
  	A smooth map $c:X \rightarrow Y$ between manifolds with no boundaries satisfying $\dim X \geq \dim Y$ is said to be {\it special generic} if at each singular point $p$, there exists suitable local coordinates and $f$ is locally
    represented by $(x_1,\cdots,x_{\dim X}) \rightarrow (x_1,\cdots,x_{\dim Y-1},{\Sigma}_{j=1}^{\dim X-\dim Y+1} {x_{\dim Y+j-1}}^2)$.
  \end{Def}

Morse functions in the Reeb's theorem are special generic. As a kind of exercises on smooth manifolds and maps and the theory of Morse functions, we can see that canonical projections of unit spheres are special generic.

  \begin{Prop}
  	The singular set $S(c)$ of the special generic map $c:X \rightarrow Y$ is a {\rm (}$\dim Y-1${\rm )}-dimensional smooth closed submanifold of $X$ and has no boundary. Furtheremore, the restriction $c {\mid}_{S(c)}$ is a smooth immersion,  
  \end{Prop}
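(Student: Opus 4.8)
The plan is to argue entirely locally, reading off everything from the normal form in the Definition. Write $n:=\dim X$ and $m:=\dim Y$; since $\dim X\geq\dim Y$ we have $\min\{\dim X,\dim Y\}=m$, so a point is singular for $c$ precisely where the rank of $dc$ is strictly less than $m$. Fix a singular point $p\in S(c)$ and choose coordinates $(x_1,\dots,x_n)$ on a neighborhood $U$ of $p$, together with coordinates near $c(p)$, in which $c$ is the map $(x_1,\dots,x_n)\mapsto(x_1,\dots,x_{m-1},\sum_{j=1}^{n-m+1}{x_{m+j-1}}^2)$. I would then work on all of $U$, not merely at $p$.

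First I would compute the Jacobian matrix of this representation at an arbitrary point of $U$: its first $m-1$ rows are the first $m-1$ standard basis covectors, and its last row is $(0,\dots,0,2x_m,2x_{m+1},\dots,2x_n)$. Hence this matrix has rank $m$ exactly where $(x_m,\dots,x_n)\neq 0$ and rank $m-1$ where that vector vanishes. Therefore $S(c)\cap U=\{q\in U\mid x_m(q)=\cdots=x_n(q)=0\}$, which is a coordinate slice of dimension $m-1=\dim Y-1$, in particular a smooth submanifold of $U$ of that dimension with empty boundary. Letting $p$ range over $S(c)$, this shows $S(c)$ is a smooth $(\dim Y-1)$-dimensional submanifold of $X$ with no boundary; and since the locus where $dc$ has maximal rank $m$ is open in $X$, its complement $S(c)$ is closed in $X$, so $S(c)$ is a closed submanifold (and compact when $X$ is a closed manifold).

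For the last assertion I would restrict $c$ to $S(c)$ within the same chart: taking $(x_1,\dots,x_{m-1})$ as coordinates on $S(c)\cap U$, the restriction $c{\mid}_{S(c)}$ is the map $(x_1,\dots,x_{m-1})\mapsto(x_1,\dots,x_{m-1},0)$, which is a smooth embedding onto a coordinate slice and hence an immersion at each point of $S(c)\cap U$. Since being an immersion is a local property and such charts cover $S(c)$, the restriction $c{\mid}_{S(c)}$ is a smooth immersion, completing the proof.

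The only point that needs care — it is scarcely an obstacle — is to keep in mind that the Definition provides the standard form on a genuine coordinate neighborhood of each singular point $p$, so we are entitled to evaluate the Jacobian throughout that neighborhood; this is exactly what lets us identify the whole singular set near $p$ (and not just the single point $p$) with a coordinate slice. Granting that observation, the argument reduces to the elementary rank computation above together with the standard fact that a subset which is locally a coordinate slice is a smooth submanifold.
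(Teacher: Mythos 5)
Your argument is correct: the rank computation on the normal form identifies $S(c)\cap U$ with the coordinate slice $\{x_{\dim Y}=\cdots=\dim X=0\}$ (i.e.\ $x_{\dim Y}=\cdots=x_{\dim X}=0$), and the restriction is visibly an immersion there; your explicit remark that the normal form holds on a whole neighborhood, so that the computation captures all nearby singular points, is exactly the point that needs to be made. The paper states this proposition without proof, treating it as a standard fact, and your local computation is precisely the standard argument that fills that gap.
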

     
Other properties are presented in the next section. Studies of Saeki and Sakuma (\cite{saeki1,saeki2,saekisakuma,saekisakuma2,sakuma}) since the 1990s,
  followed by Nishioka (\cite{nishioka}) and Wrazidlo (\cite{wrazidlo,wrazidlo2,wrazidlo3}), show that the differentiable structures of the spheres and the homology groups of the manifolds are restricted in several cases. This explicitly shows that special generic maps are attractive. Some of these results are also presented later.
  
  Our paper studies the cohomology rings of manifolds admitting such maps. We also introduce terminologies and notation on homology groups, cohomology rings, characteristic classes of manifolds such as {\it Stiefel-Whitney classes} and {\it Pontrjagin classes} and other notions from algebraic topology and differential topology in the next section. We also expect that readers have some knowledge on them. \cite{hatcher} and \cite{milnorstasheff} explain about them systematically.
  \begin{MainThm}
  	\label{mthm:1}
  	Let $m \geq 8$ be an arbitrary integer. Let $m^{\prime} \geq 6$ be another integer satisfying the condition $m-m^{\prime} \geq 2$. Assume also the existence of some integer $a$. Furthermore, the following conditions hold.
  	\begin{itemize}
  		\item $a=2$ or $a=m^{\prime}-3$.
  		\item $a+m^{\prime}-1 \leq m$
  		\item $2m^{\prime}-m-a=1$.
  	\end{itemize}
  Let $G$ be an arbitrary finite commutative group which is not the trivial group.
  	Then we have a closed and simply-connected manifold $M$ enjoying the following properties.
  	\begin{enumerate}
  		\item \label{mthm:1.1} There exists an $m^{\prime}$-dimensional closed and simply-connected manifold $M^{\prime}$ and $M$ is diffeomorphic to $M^{\prime} \times S^{m-m^{\prime}}$.
  		  		\item \label{mthm:1.2} The $j$-th homology group of $M^{\prime}$ whose coefficient ring is $\mathbb{Z}$ is isomorphic to $G$ if $j=2,m^{\prime}-3$, $\mathbb{Z}$ if $j=0,m^{\prime}$, and the trivial group, otherwise.
  		\item \label{mthm:1.3} $M$ admits a special generic map into ${\mathbb{R}}^n$ for $m^{\prime}<n\leq m$ whereas it admits no special generic maps into ${\mathbb{R}}^n$ for $1 \leq n \leq 4$ and $n=m^{\prime}$. Furthermore, the special generic maps can be obtained as ones whose restrictions to the singular sets are embeddings and {\rm product organized} ones, defined in Definition \ref{def:2}, later. 
  	  	\end{enumerate}
  \end{MainThm}
  \begin{MainThm}
  	\label{mthm:2}
  	Let $\{G_j\}_{j=1}^5$ be a sequence of free commutative groups such that $G_{j}$ and $G_{6-j}$ are isomorphic for $1 \leq j \leq 2$, that $G_{1}$ is not the trivial group and that the rank of $G_{3}$ is even. Let $G_{{\rm F},1}$ be an arbitrary finite commutative group and $G_{{\rm F},2}$ a finite commutative group which is not the trivial group.
   	Let $\{M_i\}_{i=1}^2$ be a pair of $8$-dimensional closed and simply-connected manifolds enjoying the following properties.
  	\begin{enumerate}
  		\item The $j$-th homology groups of these manifolds whose coefficient rings are $\mathbb{Z}$ are mutually isomorphic to $G_{1} \oplus G_{{\rm F},1}$ for $j=2$, the direct sum of $G_{2} \oplus G_{{\rm F},2}$ for $j=3$, the direct sum of $G_{3} \oplus G_{{\rm F},2}$ for $j=4$, the direct sum of $G_{4} \oplus G_{{\rm F},1}$ for $j=5$, and $G_{5}$ for $j=6$.
  		\item Consider the restrictions to the subgroups generated by all elements whose orders are infinite for these manifolds. They are regarded as subalgebras and isomorphic to the cohomology rings of manifolds represented as connected sums of finitely many manifolds diffeomorphic to the products of two homotopy spheres where the coefficient ring is $\mathbb{Z}$.
  		\item For an arbitrary positive integer $j>0$, the $j$-th Stiefel-Whitney classes and the $j$-th  Pontrjagin classes of these manifolds are the zero elements of the $j$-th cohomology groups whose coefficient rings are $\mathbb{Z}/2\mathbb{Z}$, the ring of order $2$, and the $4j$-th cohomology groups whose coefficient rings are $\mathbb{Z}$, respectively.
  		\item $M_i$ does not admit special generic maps into ${\mathbb{R}}^n$ for $1 \leq n < i+5$ whereas it admits one into ${\mathbb{R}}^n$ for $i+5 \leq n \leq 8$.
  		Furthermore, the special generic maps can be obtained as ones whose restrictions to the singular sets are embeddings and {\rm product organized} ones, defined in Definition \ref{def:2}, later. 
  	\end{enumerate}
  	\end{MainThm}
 Main Theorem \ref{mthm:2} is regarded as a variant of Main Theorem 4 of \cite{kitazawa2}. 
 
 In the next section, we present additional fundamental properties of special generic maps. The third section is devoted to our proof of Main Theorems. We also present existing studies on the the homology groups and the cohomology rings of the manifolds. \\
 \ \\
 {\bf Conflict of Interest.} \\
 The author was a member of the project supported by JSPS KAKENHI Grant Number JP17H06128 "Innovative research of geometric topology and singularities of differentiable mappings"
 (Principal investigator: Osamu Saeki) and 
 is also a member of the project JSPS KAKENHI Grant Number JP22K18267 "Visualizing twists in data through monodromy" (Principal Investigator: Osamu Saeki). The present study is also supported by these projects. \\
 \ \\
 {\bf Data availability.} \\
 Data essentially supporting the present study are all in the present paper.
 
 \section{Fundamental properties and existing studies on special generic maps and the manifolds.}
 
 First, we review elementary algebraic topology and other notions, terminologies and notation we need.

Let $(X,X^{\prime})$ be a pair of topological spaces where these two spaces satisfy the relation $X^{\prime} \subset X$ and may be empty. The ({\it $k$-th}) {\it homology group} of this pair $(X,X^{\prime})$ whose coefficient ring is $A$ is denoted by $H_{k}(X,X^{\prime};A)$. In the case $X^{\prime}$ is empty, we may omit ",$X^{\prime}$" in the notation and we call the homology group (cohomology group) of $(X,X^{\prime})$ the {\it homology group} (resp. {\it cohomology group}) of $X$ in general.

For a topological space $X$ of suitable classes, the {\it $k$-th homotopy group} of $X$ is denoted by ${\pi}_k(X)$.
The class of arcwise-connected cell complex, containing the class of arcwise-connected CW complexes for example, is one of such classes.

Let $(X,X^{\prime})$ and $(Y,Y^{\prime})$ be pairs of topological spaces where these spaces satisfy the relations $X^{\prime} \subset X$ and $Y^{\prime} \subset Y$ and may be empty. Given a continuous map $c:X \rightarrow Y$ satisfying $c(X^{\prime}) \subset Y^{\prime}$, then $c_{\ast}:H_{k}(X,X^{\prime};A) \rightarrow H_{k}(Y,Y^{\prime};A)$, $c^{\ast}:H^{k}(Y,Y^{\prime};A) \rightarrow H^{k}(X,X^{\prime};A)$ and $c_{\ast}:{\pi}_k(X) \rightarrow {\pi}_k(Y)$ denote the canonically induced homomorphisms where in considering homotopy groups the spaces satisfy the suitable conditions for example. 

Let $X$ be a topological space and $A$ a commutative ring. Let
$H^{\ast}(X;A)$ denote the direct sum ${\oplus}_{j=0}^{\infty} H^j(X;A)$ for all integers greater than or equal to $0$. 
For any sequence $\{a_j\}_{j=1}^l \subset H^{\ast}(X;A)$ of elements of length $l>0$, we can define the {\it cup product} ${\cup}_{j=1}^l a_j \in H^{\ast}(X;A)$ for general $l$. $a_1 \cup a_2$ is also used in the case $l=2$. We can have the structure of a graded commutative algebra $H^{\ast}(X;A)$. This is the {\it cohomology ring} of $X$ whose {\it coefficient ring} is $A$.
 
The {\it fundamental class} of a compact, connected and oriented manifold $Y$ is defined as the canonically and uniquely defined element of the ($\dim Y$)-th homology group, which is also a generator of the group $H_{\dim Y}(Y,\partial Y;\mathbb{Z})$, isomorphic to the group $\mathbb{Z}$. 
Let $i_{Y,X}:Y \rightarrow X$ be a smooth immersion satisfying $i_{Y,X}(\partial Y) \subset \partial X$ and $i_{Y,X}({\rm Int}\ Y) \subset {\rm Int}\ X$. In other words, $Y$ is smoothly and {\it properly} immersed or embedded into $X$.

Note that in considering other categories such as the PL category and the topology category, we consider suitable maps playing roles as the smooth immersions play. 
If for an element $h \in H_j(X,\partial X;A)$, the value of the homomorphism ${i_{Y,X}}_{\ast}$ induced by the map $i_{Y,X}:Y \rightarrow X$ at the fundamental class of $Y$ is $h$, then $h$ is {\it represented} by $Y$. 

We also need several fundamental methods and theorems. For example, homology exact sequences for pairs of topological spaces, Mayer-Vietoris sequences, Poincar\'e duality (theorem) for compact and connected (orientable) manifolds, universal coefficient theorem and K\"unneth theorem for the products of topological spaces. 

For such explanations, see \cite{hatcher} again for example.

The {\it diffeomorphism group} of a smooth manifold is the group of all diffeomorphisms from the manifold to itself endowed with the so-called {\it Whitney $C^{\infty}$ topology}.

A {\it smooth} bundle means a bundle whose fiber is a smooth manifold whose structure group is the diffeomorphism group. A {\it linear} bundle means a bundle whose fiber is a Euclidean space, unit sphere, or a unit disk and whose structure group consists of linear transformations. Note that a linear transformation here is defined in a matural and canonical way.

For general theory of bundles, see \cite{steenrod}. \cite{milnorstasheff} is for linear bundles.
\begin{Prop}
	\label{prop:2}
	For a special generic map on an $m$-dimensional closed and connected manifold $M$ into ${\mathbb{R}}^n$, we have the following properties.
	\begin{enumerate}
		\item \label{prop:2.1}
		There exists an $n$-dimensional compact and connected smooth manifold $W_f$ such that a smooth surjection $q_f:M \rightarrow W_f$ and a smooth immersion $\bar{f}:W_f \rightarrow {\mathbb{R}}^n$ enjoying the relation $f=\bar{f} \circ q_f$ exist.
		\item \label{prop:2.2}
		There exists a small collar neighborhood $N(\partial W_f)$ of the boundary $\partial W_f \subset W_f$ and the composition of the restriction of $f$ to the preimage with the canonical projection to $\partial W_f$ gives a linear bundle whose fiber is the {\rm (}$m-n+1${\rm )}-dimensional unit disk $D^{m-n+1}$.
		\item \label{prop:2.3}
		The restriction of $f$ to the preimage of $W_f-{\rm Int}\ N(\partial W_f)$ gives a smooth bundle whose fiber is an {\rm (}$m-n${\rm )}-dimensional standard sphere. It is regarded as a linear bundle in specific cases. For example, in the case $m-n=0,1,2,3$, the bundle is regarded as a linear bundle.
	\end{enumerate}
\end{Prop}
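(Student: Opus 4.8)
The plan is to realize $W_f$ as the Stein factorization of $f$ and to deduce all three assertions from the local normal form in the definition of a special generic map, together with Ehresmann's fibration theorem. I first note that $S(f)\neq\emptyset$: were it empty, the proper submersion $f$ of the compact manifold $M$ onto the connected, contractible base $\mathbb{R}^{n}$ would be a trivial bundle, so $M\cong F\times\mathbb{R}^{n}$ could not be compact. Recall also that $S(f)$ is a closed $(n-1)$-dimensional submanifold of $M$ with no boundary.

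First I would set $W_f:=M/\!\sim$, where $x\sim y$ precisely when $f(x)=f(y)$ and $x,y$ lie in the same connected component of $f^{-1}(f(x))$; let $q_f\colon M\to W_f$ be the quotient map and $\bar{f}\colon W_f\to\mathbb{R}^{n}$ the induced map, so $f=\bar{f}\circ q_f$ and $q_f$ is surjective. The main step is to put a smooth structure on $W_f$ for which $q_f$ is smooth and $\bar{f}$ is an immersion, by using two kinds of chart. Around $q_f(x)$ with $x\notin S(f)$, the map $f$ is a submersion near the closed fiber component through $x$; choosing a local section $\sigma$ of $f$ through $x$, one checks that $q_f\circ\sigma$ is a homeomorphism onto an open neighborhood of $q_f(x)$ in $W_f$, and in this chart $\bar{f}$ becomes the identity, hence an immersion. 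Around $q_f(x)$ with $x\in S(f)$, on a suitable saturated neighborhood $q_f$ is modeled on $(x_1,\dots,x_m)\mapsto(x_1,\dots,x_{n-1},\sum_{j=1}^{m-n+1}{x_{n+j-1}}^{2})$ (the fiber components near such a point being the connected level sets of the quadratic part), $W_f$ is locally the closed half-space $\{y_n\ge 0\}$, and $\bar{f}$ is the inclusion, again an immersion. Checking that $W_f$ is Hausdorff and that the transition maps between these charts are smooth and respect the boundary half-space is routine given the smoothness of $f$ and these two descriptions. Then $W_f$ is a smooth $n$-manifold, compact and connected as a continuous image of $M$, with $\partial W_f=q_f(S(f))$ and $q_f|_{S(f)}$ a diffeomorphism onto $\partial W_f$; this is (\ref{prop:2.1}).

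For (\ref{prop:2.2}) I would take a closed collar $N(\partial W_f)\cong\partial W_f\times[0,1]$. Over a chart of the second kind, $q_f^{-1}(\{0\le y_n\le\varepsilon\})$ is the solid quadric $\{\sum_{j=1}^{m-n+1}{x_{n+j-1}}^{2}\le\varepsilon\}$, a product $D^{m-n+1}$-bundle over $\{y_n=0\}$ via $(x)\mapsto(x_1,\dots,x_{n-1})$; since the gluing data of the quadratic forms $\sum{x_{n+j-1}}^{2}$ on overlaps lie in $O(m-n+1)$, these trivializations patch to a linear $D^{m-n+1}$-bundle over $\partial W_f$ whose restriction to the collar is exactly the description of $f$ near $\partial W_f$ in (\ref{prop:2.2}). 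For (\ref{prop:2.3}), note that $q_f^{-1}(\mathrm{Int}\,W_f)=M\setminus S(f)$ and that $q_f$ is a submersion there, so $q_f$ restricts to a proper submersion $q_f^{-1}(W_f\setminus\mathrm{Int}\,N(\partial W_f))\to W_f\setminus\mathrm{Int}\,N(\partial W_f)$ of compact manifolds with boundary, taking boundary to boundary; by Ehresmann's theorem it is a smooth fiber bundle with connected closed $(m-n)$-dimensional fiber over a connected base. That base contains the inner collar boundary, over which the fiber is the sphere $S^{m-n}$ bounding the disk bundle of (\ref{prop:2.2}); hence every fiber is the standard sphere $S^{m-n}$. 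Finally, for $m-n\le 3$ the inclusion $O(m-n+1)\hookrightarrow\mathrm{Diff}(S^{m-n})$ is a homotopy equivalence --- trivially for $m-n=0,1$, by Smale for $m-n=2$, by Hatcher for $m-n=3$ --- so the structure group of this $S^{m-n}$-bundle reduces to $O(m-n+1)$ and the bundle is linear.

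The hard part will be the second paragraph: showing that the two families of charts genuinely assemble into a smooth manifold-with-boundary structure on the quotient set $W_f$, i.e.\ that one can choose saturated model neighborhoods and that the resulting transition functions are smooth and compatible with the boundary, and, relatedly, that the locally defined disk- and sphere-bundle trivializations patch with the asserted structure groups $O(m-n+1)$ and $\mathrm{Diff}(S^{m-n})$. This is the technical heart of the statement --- in essence the Burlet--de Rham--Saeki structure theorem for special generic maps --- and the point where one must ensure that everything is smooth rather than merely topological.
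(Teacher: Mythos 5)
The paper offers no proof of this proposition: it is quoted as a standard structure theorem for special generic maps, due essentially to Burlet--de Rham and to Saeki \cite{saeki1}, so there is no in-paper argument to compare yours against. Your sketch follows exactly the standard route (Stein factorization, the two chart types coming from the local normal form, Ehresmann's theorem on the regular part, and $\mathrm{Diff}(S^{k})\simeq O(k+1)$ for $k\le 3$ via Smale and Hatcher) and is correct in outline. Two points are stated more strongly than your argument earns: the transition data for the quadratic normal forms do not literally lie in $O(m-n+1)$ --- one must invoke a parametrized Morse lemma (equivalently, the contractibility of the space of positive definite quadratic forms) to \emph{reduce} the structure group of the tube around $S(f)$ to $O(m-n+1)$; and the claim that the connected component of a fiber through a singular point is a single point, hence that saturated model neighborhoods of the required form exist, is a global statement that does not follow from the local normal form alone. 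You correctly identify both as the technical heart; both are carried out in \cite{saeki1}, to which the paper implicitly defers.
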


\begin{Prop}
	\label{prop:3}
	In Proposition \ref{prop:2}, we can have an {\rm (}m+1{\rm )}-dimensional compact and connected topological {\rm (}PL{\rm )} manifold $W$ and a continuous {\rm (}resp. piesewise smooth{\rm )} surjection $r:W \rightarrow W_f$ enjoying the following properties where we abuse the notation.
	\begin{enumerate}
		\item $M$ is the boundary of $W$ and the restriction of $r$ to the boundary $M$ is $q_f$.
		\item $r$ gives the structure of a bundle over $W_f$ whose fiber is homeomorphic {\rm (}resp. PL homeomorphic{\rm )} to the {\rm (}$m-n+1${\rm )}-dimensional unit disk $D^{m-n+1}$ and whose structure group consists of piesewise smooth homeomorphisms.
	
		\item $W$ is decomposed into two {\rm (}m+1{\rm )}-dimensional topological {\rm (}resp. PL{\rm )} manifolds $W_1$ and $W_2$.
		\item We can consider the composition of the restriction of $r$ to $W_1$ with a canonical projection to $\partial W_f$. This gives a linear bundle whose fiber is the {\rm (}$m-n+2${\rm )}-dimensional unit disk $D^{m-n+2}$. Furthermore, this bundle is realized as a bundle whose subbundle obtained by considering the boundary of the unit disk and a suitable smoothly embedded copy of the unit disk $D^{m-n+1}$ there as its fiber is the linear bundle of Proposition \ref{prop:2} {\rm (}\ref{prop:2.2}{\rm )}.
		\item The restriction of $r$ to $W_2$ gives a bundle whose fiber is diffeomorphic to the {\rm (}$m-n+1${\rm )}-dimensional unit disk $D^{m-n+1}$ {\rm (}resp. and whose structure group consists of piesewise smooth homeomorphisms{\rm )} . Furthermore, this bundle is realized as a bundle whose subbundle obtained by considering the boundary of the unit disk as its fiber is the smooth bundle of Proposition \ref{prop:2} {\rm (}\ref{prop:2.3}{\rm )}.
		\item In the case $m-n=0,1,2,3$ for example, $W$ can be chosen as a smooth one and we can also do in such a way that $r$ gives a linear bundle over $W_f$.
	\end{enumerate}
\end{Prop}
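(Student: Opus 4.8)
The plan is to build the manifold $W$ and the surjection $r$ by hand, assembling them over $W_f$ out of the two bundle descriptions of $q_f$ provided by Proposition \ref{prop:2}: the linear $D^{m-n+1}$-bundle over the collar of $\partial W_f$ coming from Proposition \ref{prop:2} (\ref{prop:2.2}), and the smooth $(m-n)$-sphere bundle over $W_f-{\rm Int}\ N(\partial W_f)$ coming from Proposition \ref{prop:2} (\ref{prop:2.3}). Identifying $W_f-{\rm Int}\ N(\partial W_f)$ with $W_f$ through the collar, these exhibit $M=q_f^{-1}(N(\partial W_f)) \cup q_f^{-1}(W_f-{\rm Int}\ N(\partial W_f))$, the two pieces being glued over $\partial W_f$ along a common ``link'' $S^{m-n}$-bundle; the task is to fill these sphere fibers with disk fibers in a mutually compatible way.

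First I would let $W_2$ be the fiberwise cone of the smooth $S^{m-n}$-bundle of Proposition \ref{prop:2} (\ref{prop:2.3}) over $W_f-{\rm Int}\ N(\partial W_f)$. Because that proposition guarantees the fibers are \emph{standard} spheres, each fiberwise cone is homeomorphic, and PL homeomorphic, to $D^{m-n+1}$; moreover coning a diffeomorphism of $S^{m-n}$ yields a self-homeomorphism of $D^{m-n+1}$ that is piecewise smooth (smooth off the cone point) and fixes the cone point. Hence $r|_{W_2}\colon W_2 \to W_f-{\rm Int}\ N(\partial W_f)$ is a $D^{m-n+1}$-bundle whose structure group lies in the piecewise smooth homeomorphisms and whose fiberwise boundary is precisely the bundle of Proposition \ref{prop:2} (\ref{prop:2.3}); this is property (5). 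When $m-n\in\{0,1,2,3\}$, where Proposition \ref{prop:2} (\ref{prop:2.3}) already provides a linear bundle, I would instead take $W_2$ to be the associated linear $D^{m-n+1}$-bundle, keeping everything smooth --- property (6).

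Next I would construct $W_1$ over the collar $N(\partial W_f)\cong\partial W_f\times[0,1]$. The linear $D^{m-n+1}$-bundle of Proposition \ref{prop:2} (\ref{prop:2.2}) over $\partial W_f$ is associated to a principal $O(m-n+1)$-bundle $P$; let $W_1$ be the $D^{m-n+2}$-bundle over $\partial W_f$ associated to $P$ via the standard inclusion $O(m-n+1)\hookrightarrow O(m-n+2)$ on $\mathbb{R}^{m-n+2}=\mathbb{R}^{m-n+1}\oplus\mathbb{R}$. Using an $O(m-n+1)$-equivariant identification $D^{m-n+2}\cong D^{m-n+1}\times[0,1]$, the map $W_1\to\partial W_f\times[0,1]=N(\partial W_f)$ makes $W_1$ a $D^{m-n+1}$-bundle over the collar whose composition with the collar projection to $\partial W_f$ is the claimed linear $D^{m-n+2}$-bundle (property (4)), while its fiberwise-boundary $S^{m-n+1}$-bundle splits, along the $O(m-n+1)$-invariant equatorial $S^{m-n}$-subbundle, into two hemispherical $D^{m-n+1}$-subbundles over $\partial W_f$, one of which is identified with the bundle of Proposition \ref{prop:2} (\ref{prop:2.2}) --- the ``smoothly embedded copy of $D^{m-n+1}$'' demanded in (4).

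Finally I would glue $W_1$ to $W_2$ along the other hemispherical $D^{m-n+1}$-subbundle of $\partial W_1$ over $\partial W_f$, which, by the normal form of a fold and the way the collar arises in Proposition \ref{prop:2}, is isomorphic to the restriction of $W_2$ to $\partial(W_f-{\rm Int}\ N(\partial W_f))$; after straightening the resulting corner this produces a compact connected $(m+1)$-dimensional PL manifold $W$ (smooth when $m-n\leq 3$), with $W=W_1\cup W_2$, together with a map $r\colon W\to W_f$ assembled from the two bundle projections, which is a $D^{m-n+1}$-bundle over $W_f$, linear over the collar. Tracking the boundary --- one hemisphere over $\partial W_f$ together with the fiberwise boundary of $W_2$ over ${\rm Int}\ W_f$ --- identifies $\partial W$ with $q_f^{-1}(N(\partial W_f))\cup q_f^{-1}(W_f-{\rm Int}\ N(\partial W_f))=M$ and $r|_M$ with $q_f$, giving properties (1)--(3). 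The one genuinely load-bearing input is that Proposition \ref{prop:2} (\ref{prop:2.3}) yields \emph{standard} spheres, without which the cone fibers need not be disks nor the structure group be by (piecewise smooth) homeomorphisms; the remaining difficulty --- and the step I would treat most carefully --- is the bookkeeping, namely checking that the three a priori unrelated bundle structures (Proposition \ref{prop:2} (\ref{prop:2.2}), Proposition \ref{prop:2} (\ref{prop:2.3}), and their common restriction over $\partial W_f$) are mutually compatible so that the pieces assemble into a single manifold carrying a single bundle. In the smooth, low-codimensional range this is Saeki's construction (\cite{saeki1}); the general case runs it in the PL and topological categories.
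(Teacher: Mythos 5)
The paper gives no proof of Proposition \ref{prop:3}: it is stated as a known fact, resting on the constructions of Saeki and Saeki--Suzuoka (\cite{saeki1,saekisuzuoka}) that the paper invokes elsewhere (for instance at the start of the proof of Propositions \ref{prop:4} and \ref{prop:6}). Your proposal reconstructs exactly that standard argument --- the fiberwise cone of the standard-sphere bundle over $W_f-{\rm Int}\ N(\partial W_f)$ for $W_2$, the associated linear $D^{m-n+2}$-bundle over the collar for $W_1$, and the gluing along the common restriction over $\partial W_f$ --- and is correct, with the only delicate points (the piecewise-smooth structure group arising at the cone points and the mutual compatibility of the three bundle structures) being precisely the ones you flag.
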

\begin{Ex}
	\label{ex:1}
	Let $l>0$ be an integer. Let $m \geq n \geq 2$ be integers.
	Other than canonical projections of units spheres, we present simplest special generic maps. We consider a connected sum of $l>0$ manifolds in the family $\{S^{n_j} \times S^{m-n_j}\}_{j=1}^l$ in the smooth category where $1 \leq n_j \leq n-1$ is an integer for $1 \leq j \leq l$. We have a special generic map $f:M \rightarrow {\mathbb{R}}^n$ on the resulting manifold $M$ such that in Proposition \ref{prop:2}, the following properties are enjoyed.
	\begin{enumerate}
		\label{ex:1.1}
		\item $\bar{f}$ is an embedding.
		\item
		\label{ex:1.2}
		 $W_f$ is represented as a boundary connected sum of $l>0$ manifolds each of which is diffeomorphic to each of $\{S^{n_j} \times D^{n-n_j}\}_{j=1}^l$ in the family. The boundary connected sum is considered in the smooth category.
		 \item \label{ex:1.3}
		 The bundles in Proposition \ref{prop:2} (\ref{prop:2.2}) and (\ref{prop:2.3}) are trivial.
	\end{enumerate}  
\end{Ex}
We introduce a result of \cite{saeki1} related to Example \ref{ex:1}.

\begin{Thm}[\cite{saeki1}]
	\label{thm:1}
 An $m$-dimensional closed and connected manifold $M$ admits a special generic map into ${\mathbb{R}}^2$ if and only if either of the following holds.
 \begin{enumerate}
 	\item $M$ is a homotopy sphere which is not an $4$-dimensional exotic sphere.
 	\item A manifold
  represented as a connected sum of smooth manifolds considered in the smooth category where each of the manifolds here is represented as either of the following manifolds. 
\begin{enumerate}
	\item The total space of a smooth bundle over $S^1$ whose fiber is a homotopy sphere for $m \neq 5$.
	\item The total space of a smooth bundle over $S^1$ whose fiber is a standard sphere for $m=5$.
\end{enumerate}
\end{enumerate}
Furthermore, for each manifold here, we can construct a special generic map so that the properties {\rm (}\ref{ex:1.1}{\rm )} and {\rm (}\ref{ex:1.2}{\rm )} in Example \ref{ex:1} are enjoyed with $n_j=1$.
\end{Thm}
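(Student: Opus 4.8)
The plan is to combine the two directions in the way that is standard for special generic maps into the plane. For ``only if'', I would reconstruct $M$ from the quotient surface $W_f$ provided by Proposition \ref{prop:2}; for ``if'' (and the closing sentence) I would run that reconstruction backwards over an explicitly chosen $W_f$. Throughout put $n=2$.

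Given a special generic $f\colon M\to{\mathbb R}^2$, Proposition \ref{prop:2} produces a compact connected surface $W_f$ with an immersion $\bar f\colon W_f\to{\mathbb R}^2$. Since ${\mathbb R}^2$ is non-compact and parallelizable, $\bar f$ forces $TW_f$ to be trivial, so $W_f$ is orientable with $\partial W_f\neq\emptyset$; equivalently $W_f$ is a $2$-disk with $l\geq 0$ one-handles attached. Proposition \ref{prop:2} also presents $M$ as the union of an $(m-2)$-sphere bundle over $W_f$ and an $(m-1)$-disk bundle over a collar of $\partial W_f$, glued along their common $(m-2)$-sphere bundle over $\partial W_f$. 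When $l=0$, that is $W_f=D^2$, both bundles are orientation-preserving, and a short Mayer--Vietoris together with a van Kampen argument shows, for $m\geq 4$, that $M$ (the union of $D^2\times S^{m-2}$ with a possibly twisted $(m-1)$-disk bundle over $S^1$) is a simply-connected homology $m$-sphere, hence a homotopy $m$-sphere; for $m\leq 3$ one checks directly that $M$ is a standard sphere, and for $m=4$ the smoothness and linearity in the last part of Proposition \ref{prop:3} (the case $m-n=2$) let me take the $(m+1)$-dimensional filling to be the smooth $D^5$, so $M=\partial D^5=S^4$. This gives conclusion (1), and in particular no $4$-dimensional exotic sphere occurs.

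When $l\geq 1$, the map $q_f\colon M\to W_f$ induces an isomorphism on fundamental groups for $m\geq 4$ (the fibres being simply connected), so $\pi_1(M)$ is free of rank $l$. The decisive step is to show that, although the one-handles of $W_f$ may be linked inside the surface, after thickening by the $(m-2)$-sphere fibre the corresponding handle regions of the $m$-manifold $M$ can be isotoped to be pairwise disjoint; then $M$ is the connected sum $M_1\#\cdots\# M_l$, where $M_i$ is obtained by the reconstruction over a single handle, i.e. over an annulus. Over an annulus the reconstruction glues two $(m-1)$-disk bundles over the two boundary circles along an $(m-2)$-sphere bundle, which presents $M_i$ as the total space of a smooth bundle over $S^1$ whose fibre is a twisted sphere $D^{m-1}\cup_\psi D^{m-1}$, i.e. a homotopy $(m-1)$-sphere; this is conclusion (2) for $m\neq 5$. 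When $m=5$ one has $m-n=3$, so Proposition \ref{prop:2} (\ref{prop:2.3}) forces all these sphere and disk bundles to be linear, whence the gluing $D^4\cup_\psi D^4$ is linear and the fibre is the standard $S^4$, which is conclusion (2) for $m=5$.

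For the converse and the closing sentence I would run the same reconstruction in reverse, keeping $\bar f$ an embedding and $W_f$ a boundary connected sum of copies of $S^1\times D^1$, so that properties (\ref{ex:1.1}) and (\ref{ex:1.2}) of Example \ref{ex:1} hold with $n_j=1$: a homotopy $m$-sphere that is not a $4$-dimensional exotic sphere is realized over $W_f=D^2$ (the empty boundary connected sum) by taking the $(m-1)$-disk bundle over $S^1$ whose clutching recovers the prescribed twisted sphere --- this realizes exactly the homotopy $m$-spheres other than the $4$-dimensional exotic ones --- while a connected sum of smooth $S^1$-bundles with homotopy-sphere (respectively standard-sphere, for $m=5$) fibres is realized over a disk with $l$ holes by putting on each annular piece the sphere and disk bundles whose gluing recovers the prescribed fibre and monodromy; this is precisely the construction behind Example \ref{ex:1} with $n_j=1$, except that the bundles are now allowed to be nontrivial. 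I expect the main obstacle to be the necessity half: rigorously showing that the linking of the one-handles of $W_f$ becomes inessential after thickening, so that $M$ genuinely splits as the asserted connected sum, together with the careful bookkeeping of the differential topology of small dimensions (the linearity statements in Propositions \ref{prop:2} and \ref{prop:3}, and the exclusion of a reconstruction producing a $4$-dimensional exotic sphere) needed to pin down the stated list in the dimensions $m=3,4,5$.
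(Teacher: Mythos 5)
The paper does not actually prove this statement: it is quoted verbatim from \cite{saeki1}, so the only comparison available is with Saeki's original argument, whose overall framework (pass to the quotient surface $W_f$ of Proposition \ref{prop:2}, observe that an immersion into ${\mathbb{R}}^2$ forces $W_f$ to be a compact orientable surface with non-empty boundary, and reconstruct $M$ from the $(m-2)$-sphere bundle over the interior part and the $(m-1)$-disk bundle over the collar) is the one you adopt. Your treatment of the case $W_f=D^2$, including the use of the smooth linear filling from Proposition \ref{prop:3} to rule out exotic $4$-spheres, is sound.

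However, the proposal leaves the two genuinely hard points unproved. First, the connected-sum splitting when $W_f$ has several $1$-handles: you state that the thickened handle regions ``can be isotoped to be pairwise disjoint'' and explicitly defer the proof. This is the heart of the necessity direction, since the $1$-handles of $W_f$ may be linked (giving $W_f$ positive genus) and nothing in your text rules out that this linking survives in $M$. The standard way to discharge it is not an isotopy inside $M$ but a handle argument in the $(m+1)$-dimensional filling $W$ of Proposition \ref{prop:3}: $W$ is a $D^{m-1}$-bundle over $W_f$, hence is $D^{m+1}$ with $l$ one-handles attached; disjoint attaching $m$-disks in $S^{m}=\partial D^{m+1}$ can always be rearranged by ambient isotopy, so $W$ is a boundary connected sum of $D^{m}$-bundles over $S^1$ and $M=\partial W$ splits as claimed, with the homotopy-sphere fibers and the smoothing ambiguity of $W$ handled together. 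Second, in the sufficiency direction you assert that the reconstruction over $W_f=D^2$ ``realizes exactly the homotopy $m$-spheres other than the $4$-dimensional exotic ones''. This does not follow from the twisted-sphere theorem: what is needed is that every homotopy $m$-sphere contains an embedded $S^1\times D^{m-1}$ whose complement is diffeomorphic to $D^2\times S^{m-2}$, which requires an h-cobordism-type argument for $m\geq 6$ and separate low-dimensional checks. Relatedly, the exotic structure does not live in the ``clutching'' of the $D^{m-1}$-bundle over $S^1$ --- that bundle is forced to be trivial because its boundary sphere bundle must match $\partial(D^2\times S^{m-2})=S^1\times S^{m-2}$ --- but in the gluing diffeomorphism of $S^1\times S^{m-2}$ between the two pieces. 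Until these two points are filled in, what you have is a correct plan rather than a proof.
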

\begin{Prop}
	\label{prop:4}
	Let $A$ be a commutative ring.
	In Proposition \ref{prop:2}, the homomorphisms ${q_f}_{\ast}:H_j(M;A) \rightarrow H_j(W_f;A)$, ${q_f}^{\ast}:H^j(W_f;A) \rightarrow H^j(M;A)$ and ${q_f}_{\ast}:{\pi}_j(M) \rightarrow {\pi}_j(W_f)$ are isomorphisms for $0 \leq j \leq m-n$.   
	
\end{Prop}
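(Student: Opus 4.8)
The plan is to exploit the factorization $q_f = r \circ \iota$ furnished by Propositions \ref{prop:2} and \ref{prop:3}, where $\iota \colon M = \partial W \hookrightarrow W$ is the boundary inclusion and $r \colon W \to W_f$ is the projection of the bundle of Proposition \ref{prop:3}, whose fibre is the contractible disk $D^{m-n+1}$. As $r$ is a fibre bundle projection over a paracompact base with contractible fibre, it is a homotopy equivalence, so $r_{\ast}$ is an isomorphism on every homology, cohomology and homotopy group. It therefore suffices to show that $\iota$ induces isomorphisms on $H_j(-;A)$, $H^j(-;A)$ and $\pi_j$ for $0 \le j \le m-n$; equivalently, that $\iota$ is sufficiently highly connected.

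The key input is a dimension estimate for $W$. The space $W_f$ is a compact $n$-dimensional manifold with nonempty boundary, since $\partial W_f \supset q_f(S(f))$ and $S(f) \neq \emptyset$ (a closed manifold admits no submersion into ${\mathbb{R}}^n$ for $n \ge 1$). A compact manifold with nonempty boundary is homotopy equivalent to a CW complex of dimension at most one less than its own, so $W_f$, hence also $W$, has the homotopy type of a complex of dimension at most $n-1$; consequently $H_k(W;A)=H^k(W;A)=0$ for all $k \ge n$ and all coefficient rings $A$. Since $W$ is a compact $(m+1)$-manifold with boundary $M$, Poincar\'e--Lefschetz duality (with local orientation coefficients where needed, which does not affect the vanishing ranges) gives
\[
H_j(W,M;A)\cong H^{\,m+1-j}(W;A),\qquad H^j(W,M;A)\cong H_{m+1-j}(W;A),
\]
and both right-hand groups vanish once $m+1-j \ge n$, i.e. for $j \le m-n+1$. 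Substituting these vanishings into the long exact homology and cohomology sequences of the pair $(W,M)$, the relevant relative groups in degrees $j$ and $j+1$ vanish for every $j \le m-n$, so $\iota_{\ast}\colon H_j(M;A)\to H_j(W;A)$ and $\iota^{\ast}\colon H^j(W;A)\to H^j(M;A)$ are isomorphisms for $0 \le j \le m-n$. Composing with $r_{\ast}$ and $r^{\ast}$ gives the assertions about ${q_f}_{\ast}$ and ${q_f}^{\ast}$.

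For the homotopy groups I would use handles. Because $W$ has the homotopy type of a complex of dimension at most $n-1$, one can present $W$ by starting from a collar $M \times [0,1]$ on its boundary and attaching handles only of index at least $(m+1)-(n-1)=m-n+2$ (obtained by trading down a handle decomposition of $W$; this step uses the Whitney trick and hence $\dim W \ge 6$, which holds in all situations relevant here, the remaining low-dimensional cases being elementary). Attaching a handle of index $\lambda$ is, up to homotopy, attaching a $\lambda$-cell, which leaves $\pi_j$ unchanged for $j \le \lambda-2$, and here $\lambda-2 \ge m-n$; hence $\iota_{\ast}\colon \pi_j(M)\to\pi_j(W)$ is an isomorphism for $0 \le j \le m-n$, and composing with $r_{\ast}$ finishes the proof. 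Alternatively, once $\iota_{\ast}$ is known to be an isomorphism on $\pi_1$, the relative Hurewicz theorem applied to the pair $(W,M)$, whose relative integral homology vanishes through degree $m-n+1$ by the computation above, yields the same conclusion. The one delicate point throughout is guaranteeing that the connectivity range reaches $m-n$ and not merely $m-n-1$: this is precisely where it is used that $W_f$ has nonempty boundary and therefore drops a dimension, and where the handle trading --- or the control of $\pi_1$ in the Hurewicz variant --- must be carried out with care.
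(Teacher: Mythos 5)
Your proposal is correct and rests on the same structural fact as the paper's proof: $q_f$ factors through the $(m+1)$-dimensional manifold $W$ of Proposition \ref{prop:3}, which deformation retracts to $W_f$ and hence to an $(n-1)$-dimensional spine, so the boundary inclusion $M=\partial W\hookrightarrow W$ is $(m-n+1)$-connected. The paper simply asserts this last step ("we can see that for $0\leq j<m-n+1$ the three homomorphisms are isomorphisms"), relying on the standard regular-neighborhood/general-position argument for a codimension-$(m-n+2)$ spine; you instead unpack it, and your route for (co)homology --- Poincar\'e--Lefschetz duality giving $H_j(W,M;A)\cong H^{m+1-j}(W;A)=0$ for $j\leq m-n+1$, then the long exact sequences --- is a clean and fully correct substitute (your parenthetical about twisted coefficients is exactly the right caveat, and the degree bookkeeping checks out). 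Two small refinements to your homotopy-group argument: since the paper actually provides a \emph{collapse} of $W$ onto an $(n-1)$-dimensional polyhedron, the dual handle decomposition built on a collar of $\partial W$ with handles of index at least $m-n+2$ comes for free, with no handle trading and hence no appeal to the Whitney trick (which would otherwise require dimension and $\pi_1$ hypotheses not assumed in Proposition \ref{prop:2}, where $M$ is only closed and connected); and the relative Hurewicz alternative needs vanishing of $H_{\ast}(W,M)$ with $\mathbb{Z}[\pi_1]$-coefficients when $W$ is not simply connected, which your duality argument does supply (cohomology of an $(n-1)$-complex with any local system vanishes above $n-1$) but which you should say explicitly. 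With either of these patches the proof is complete and, in its essentials, the same as the paper's.
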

We can show the following theorem by applying Proposition \ref{prop:4} with Proposition \ref{prop:2}, some fundamental theory on $3$-dimensional compact manifolds and some previously presented facts. For a proof, see the
 original paper \cite{saeki1} for example.
\begin{Thm}[\cite{saeki1}]
	\label{thm:2}
	Let $m>3$ be an arbitrary integer.
	\begin{enumerate}
		\item If an $m$-dimensional closed and simply-connected manifold admits a special generic map into ${\mathbb{R}}^3$, then it is either of the following.
		\begin{enumerate}
		\item A homotopy pshere which is not a $4$-dimensional exotic sphere.
		\item A manifold represented as a connected sum of smooth manifolds considered in the smooth category where each of the manifolds here is represented as the total space of a smooth bundle over $S^2$ whose fiber is a homotopy sphere. Moreover, the homotopy sphere of the fiber is also not a $4$-dimensional exotic sphere.
		\end{enumerate}
	\item 
	Furthermore, for each manifold here admitting a special generic map into ${\mathbb{R}}^3$, we can construct a special generic map so that the properties {\rm (}\ref{ex:1.1}{\rm )} and {\rm (}\ref{ex:1.2}{\rm )} in Example \ref{ex:1} are enjoyed with $n_j=2$.
	\end{enumerate}
\end{Thm}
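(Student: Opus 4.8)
To prove Theorem \ref{thm:2} the plan is to read the topology of $M$ off its Stein factorization, using that for a map into ${\mathbb R}^3$ the base space is a compact simply-connected $3$-manifold, and then, for part~(2), to straighten the immersion $\bar f$ to an embedding while keeping $M$ fixed, in the spirit of Example \ref{ex:1}. So first I would apply Proposition \ref{prop:2} to a special generic map $f\colon M\to{\mathbb R}^3$, obtaining a compact connected smooth $3$-manifold $W_f$, a smooth surjection $q_f\colon M\to W_f$ and an immersion $\bar f\colon W_f\to{\mathbb R}^3$ with $f=\bar f\circ q_f$. Since $m-3\ge 1$, Proposition \ref{prop:4} makes ${q_f}_{\ast}\colon\pi_j(M)\to\pi_j(W_f)$ an isomorphism for $j=0,1$, so $W_f$ is connected and simply-connected; and since a submersion from a closed manifold into ${\mathbb R}^3$ cannot exist (its image would be non-empty, open and compact), $S(f)\ne\emptyset$, whence $\partial W_f=q_f(S(f))\ne\emptyset$.

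Now I would invoke $3$-manifold topology: the boundary of a compact simply-connected $3$-manifold is a disjoint union of, say, $k\ge 1$ copies of $S^2$ (for instance by the ``half lives, half dies'' principle for $H_1(-;{\mathbb Q})$, using that a simply-connected manifold is orientable). Capping these spheres off with $3$-balls yields a closed simply-connected $3$-manifold, which is $S^3$ by the Poincar\'e conjecture --- or, keeping inside the original argument of \cite{saeki1}, a homotopy $3$-sphere, which is exactly what forces the word ``homotopy'' into the statement --- so $W_f$ is diffeomorphic to $S^3$ with $k$ disjoint open balls removed, that is, to the boundary connected sum ${\natural}_{k-1}(S^2\times D^1)$ (resp.\ to such a boundary connected sum with a homotopy $3$-ball). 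By Proposition \ref{prop:3}, $M=\partial W$ for a $D^{m-2}$-bundle $r\colon W\to W_f$; a boundary connected sum decomposition of $W_f$ lifts (the bundle being trivial over the contractible gluing balls) to $W=W_0\,\natural\,W_1\,\natural\cdots\natural\,W_{k-1}$, with each $W_i$, $i\ge 1$, a $D^{m-1}$-bundle over $S^2$ and $W_0$ a $D^{m-2}$-bundle over a homotopy $3$-ball, and taking boundaries gives a connected sum $M=\partial W_0\,\#\,\partial W_1\,\#\cdots\#\,\partial W_{k-1}$. Here $\partial W_i$ ($i\ge 1$) is the total space of an $(m-2)$-sphere bundle over $S^2$ and $\partial W_0$, being the boundary of a disk bundle over a contractible manifold, is a homotopy $m$-sphere; for $m\le 6$ (so $m-3\le 3$) the last assertion of Proposition \ref{prop:3} lets one take $W$ smooth with $r$ linear, making these bundles linear with standard fibre and identifying $\partial W_0$ with $S^m$, while in all other cases the relevant fibre dimension is not $4$, so no $4$-dimensional exotic sphere occurs. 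Finally, connect-summing a bundle over $S^2$ with a homotopy $m$-sphere again gives such a bundle (push the homotopy sphere into a fibre), so $\partial W_0$ can be absorbed into another summand, or is all of $M$ when $k=1$; this is the list in part~(1), the detailed bookkeeping of the gluing being what one should defer to \cite{saeki1}.

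For part~(2), I would note that ${\natural}_{k-1}(S^2\times D^1)$, being $S^3$ with $k$ balls removed, embeds smoothly in $S^3$ with one ball removed, i.e.\ in ${\mathbb R}^3$. Fixing such an embedding $\iota\colon W_f\hookrightarrow{\mathbb R}^3$ and keeping the same disk bundle $r\colon W\to W_f$ --- so that $\partial W=M$ is unchanged as a smooth manifold --- the composite $\iota\circ q_f\colon M=\partial W\to{\mathbb R}^3$ is again special generic, since composing with an immersion introduces no new singular points and over a collar of $\partial W_f$ the map retains the definite-fold normal form carried by $q_f$. Then $\bar f=\iota$ is an embedding, i.e.\ property~(\ref{ex:1.1}) holds, and $W_f\cong{\natural}_{k-1}(S^2\times D^1)$ is the boundary connected sum of copies of $S^{n_j}\times D^{n-n_j}$ with $n=3$ and $n_j=2$, i.e.\ property~(\ref{ex:1.2}) holds; when $k=1$ one takes instead the canonical projection of $S^m$, whose base is $D^3$. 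This is precisely the construction of Example \ref{ex:1}, carried out with the possibly non-trivial disk bundle coming from the given map rather than with a trivial one.

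The main obstacle is the $3$-manifold input --- that a compact simply-connected $3$-manifold with boundary is a punctured $3$-sphere --- which with the Poincar\'e conjecture is immediate but in the original argument requires carrying a possible homotopy $3$-sphere throughout, whence the conclusion only asserts a ``homotopy sphere which is not a $4$-dimensional exotic sphere''; one then has to check that none of the homotopy spheres produced (a capped-off base, or a boundary of a disk bundle over a homotopy ball or over $S^2$) is a $4$-dimensional exotic sphere, which follows from the codimension restriction in Proposition \ref{prop:3} when $m\le 6$ and from dimension counting otherwise. A secondary technical point is that when $m-3\ge 4$ the total space $W$ of Proposition \ref{prop:3} is only topological or PL, so the summands above must be recognised through the genuinely smooth bundle data near $\partial W_f$ in Proposition \ref{prop:2}\,(\ref{prop:2.2}) and (\ref{prop:2.3}) rather than through $W$ itself.
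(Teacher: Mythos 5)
Your proposal is essentially the paper's approach: the paper offers no independent proof of Theorem \ref{thm:2}, stating only that it follows from Propositions \ref{prop:2} and \ref{prop:4} together with ``some fundamental theory on $3$-dimensional compact manifolds'' and referring to \cite{saeki1} for details, and your sketch develops exactly those ingredients (simple connectivity of $W_f$ via Proposition \ref{prop:4}, recognition of $W_f$ as a punctured (homotopy) $3$-sphere, reconstruction of $M$ from the bundle data of Propositions \ref{prop:2} and \ref{prop:3}, and re-embedding $W_f$ into ${\mathbb{R}}^3$ for part (2)). The one quibble is the parenthetical ``push the homotopy sphere into a fibre,'' which is dimensionally garbled since $\partial W_0$ is $m$-dimensional while the fibre is $(m-2)$-dimensional --- the cleaner observation is that the $D^3$ boundary-connect-summand of $W_f$ is redundant, so $\partial W_0$ contributes only a standard $S^m$ (or, without Poincar\'e, the homotopy-sphere case of the statement) --- but this, like the gluing bookkeeping you explicitly defer to \cite{saeki1}, does not change the architecture of the argument.
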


The following is shown in \cite{nishioka} to show Theorem \ref{thm:3} for $(m,n)=(5,4)$ by applying a classification result of $5$-dimensional closed and simply-connected manifolds of \cite{barden}, Propositon \ref{prop:4} and some additional arguments on homology groups.
\begin{Prop}[\cite{nishioka}]
	\label{prop:5}
	Let $k \geq 3$ be an integer.
For a $k$-dimensional compact and connected manifold $X$ such that $H_1(X;\mathbb{Z})$ is the trivial group, $X$ is orientable and the homology group $H_{j}(X;\mathbb{Z})$ is free for $j=k-2,k-1$. 
	
\end{Prop}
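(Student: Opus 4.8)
I would prove Proposition \ref{prop:5} by combining Poincar\'e--Lefschetz duality, the universal coefficient theorem and the long exact homology sequence of the pair $(X,\partial X)$, allowing $X$ to have nonempty boundary so that the statement can be applied, for instance, to the manifold $W_f$ of Proposition \ref{prop:2}. Since $X$ is compact, all of its homology and cohomology groups with $\mathbb{Z}$ coefficients are finitely generated, so for such a group $G$ the group $\mathrm{Hom}(G,\mathbb{Z})$ is free and $\mathrm{Ext}(G,\mathbb{Z})$ is isomorphic to the torsion subgroup of $G$; I would record this at the outset. The first step is orientability: by the universal coefficient theorem together with $H_0(X;\mathbb{Z})\cong\mathbb{Z}$ one has $H^1(X;\mathbb{Z}/2\mathbb{Z})\cong\mathrm{Hom}(H_1(X;\mathbb{Z}),\mathbb{Z}/2\mathbb{Z})$, and this is the trivial group by hypothesis; hence the orientation double covering of $X$, which is classified by an element of $H^1(X;\mathbb{Z}/2\mathbb{Z})$, is trivial, so $X$, and therefore $\partial X$, is orientable and duality with $\mathbb{Z}$ coefficients becomes available.

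Next I would treat the two freeness claims. For $j=k-1$, Poincar\'e--Lefschetz duality gives $H_{k-1}(X;\mathbb{Z})\cong H^1(X,\partial X;\mathbb{Z})$, which the universal coefficient theorem identifies with $\mathrm{Hom}(H_1(X,\partial X;\mathbb{Z}),\mathbb{Z})\oplus\mathrm{Ext}(H_0(X,\partial X;\mathbb{Z}),\mathbb{Z})$; since $X$ is connected, $H_0(X,\partial X;\mathbb{Z})$ is the trivial group if $\partial X\neq\emptyset$ and equals $\mathbb{Z}$ if $\partial X=\emptyset$, so it is free, the $\mathrm{Ext}$ term vanishes, and $H_{k-1}(X;\mathbb{Z})$ is free. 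For $j=k-2$, duality and the universal coefficient theorem show that the torsion subgroup of $H_{k-2}(X;\mathbb{Z})\cong H^2(X,\partial X;\mathbb{Z})$ is isomorphic to $\mathrm{Ext}(H_1(X,\partial X;\mathbb{Z}),\mathbb{Z})$, hence to the torsion subgroup of $H_1(X,\partial X;\mathbb{Z})$; so it suffices to show that $H_1(X,\partial X;\mathbb{Z})$ is torsion free. Here the hypothesis $H_1(X;\mathbb{Z})=0$ is used decisively: in the exact sequence $H_1(X;\mathbb{Z})\to H_1(X,\partial X;\mathbb{Z})\to H_0(\partial X;\mathbb{Z})$ the left-hand term vanishes, so $H_1(X,\partial X;\mathbb{Z})$ embeds into $H_0(\partial X;\mathbb{Z})$, a finitely generated free abelian group, and a subgroup of such a group is free.

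I do not expect a serious obstacle; the argument is elementary, and the only point needing care is keeping the absolute and relative groups apart and checking that the $\mathrm{Ext}$ contributions coming from the zeroth (relative) homology genuinely vanish both when $\partial X=\emptyset$ and when $\partial X\neq\emptyset$. In the write-up I would dispose of the closed case first, where ordinary Poincar\'e duality makes the computation shortest, and then indicate the routine changes needed for nonempty boundary.
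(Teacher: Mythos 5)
The paper does not prove Proposition \ref{prop:5} at all; it is simply quoted from \cite{nishioka}, so there is no in-paper argument to compare against. Your proof is correct and is the standard one (and, as far as I can tell, essentially the argument in Nishioka's paper): orientability from $H^1(X;\mathbb{Z}/2\mathbb{Z})\cong \mathrm{Hom}(H_1(X;\mathbb{Z}),\mathbb{Z}/2\mathbb{Z})=0$, freeness of $H_{k-1}$ from Poincar\'e--Lefschetz duality plus the universal coefficient theorem, and freeness of $H_{k-2}$ by identifying its torsion with $\mathrm{Ext}(H_1(X,\partial X;\mathbb{Z}),\mathbb{Z})$ and killing it via the embedding $H_1(X,\partial X;\mathbb{Z})\hookrightarrow H_0(\partial X;\mathbb{Z})$ coming from the exact sequence of the pair; all the boundary-case bookkeeping you flag is handled correctly.
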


\begin{Thm}[\cite{saeki1} for $(m,n)=(4,3),(5,3),(6,3)$,\cite{nishioka} for $(m,n)=(5,4)$, and \cite{kitazawa5} for $(m,n)=(6,4)$]
\label{thm:3}
	For integers $(m,n)=(4,3),(5,3),(5,4),(6,4)$, An $m$-dimensional closed and simply-connected manifold $M$ admits a special generic map into ${\mathbb{R}}^n$ if and only if $M$ is as follows.
	\begin{enumerate}
	 \item A standard sphere.
	 \item A manifold represented as a connected sum of smooth manifolds considered in the smooth category where each of the manifolds here is as follows.
	 \begin{enumerate}
	 	\item The total space of a linear bundle over $S^2$ whose fiber is the unit sphere $S^{m-2}$.
	 	\item Only in the case $(m,n)=(6,4)$, a manifold diffeomorphic to $S^3 \times S^3$.
	 	\end{enumerate}
 	\end{enumerate}
 	 Furthermore, for each manifold here, we can construct a special generic map so that the properties {\rm (}\ref{ex:1.1}{\rm )} and {\rm (}\ref{ex:1.2}{\rm )} in Example \ref{ex:1} are enjoyed with $n_j=2$ unless $(m,n)=(6,4)$. In the case $(m,n)=(6,4)$, we can do similarly with $n_j=2,3$.
\end{Thm}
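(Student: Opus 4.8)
The plan is to prove the two directions separately: the ``if'' part is a construction, and the ``only if'' part is an analysis of the quotient space $W_f$ produced by Proposition~\ref{prop:2}. For the ``if'' direction one argues as in Example~\ref{ex:1} and Theorem~\ref{thm:2}. A standard $m$-sphere carries a canonical projection, whose $W_f$ is $D^n$ and whose $\bar f$ is an embedding. The total space of a linear $S^{m-2}$-bundle over $S^2$ admits a special generic map into ${\mathbb{R}}^n$ whose $W_f$ is a linear $D^{n-2}$-bundle over $S^2$, embedded in ${\mathbb{R}}^n$, chosen so that the bundles of Proposition~\ref{prop:2} are exactly the ones induced fibrewise from the given linear bundle; for the extra case $(m,n)=(6,4)$ one does the same for $S^3\times S^3$ using $W_f=S^3\times D^1$. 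Fusing these maps by the connected-sum operation for special generic maps, which keeps $\bar f$ an embedding and keeps the maps product organized in the sense of Definition~\ref{def:2}, yields the desired map for every manifold in the list, with the properties (\ref{ex:1.1}) and (\ref{ex:1.2}) of Example~\ref{ex:1} and the asserted values of $n_j$.

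For the ``only if'' direction, suppose $f\colon M\to{\mathbb{R}}^n$ is special generic. In each pair under consideration $m-n\le 3$, so by Proposition~\ref{prop:2}~(\ref{prop:2.2}), (\ref{prop:2.3}) and Proposition~\ref{prop:3} the manifold $M$ is diffeomorphic to the boundary of a linear $D^{m-n+1}$-bundle over $W_f$. Moreover $W_f$ is a compact connected $n$-manifold admitting an immersion into ${\mathbb{R}}^n$, hence parallelizable and in particular orientable, and by Proposition~\ref{prop:4} the surjection $q_f$ induces an isomorphism on $\pi_1$, so $W_f$ is simply-connected. Everything now reduces to classifying such $W_f$ and computing the boundary of the associated disk bundle; the linearity is automatic from $m-n\le 3$, so the summands of $M$ that arise are linear bundles.

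When $n=3$, $W_f$ is a simply-connected compact orientable $3$-manifold, so each component of $\partial W_f$ is a $2$-sphere; capping these off and invoking the Poincar\'e conjecture shows that $W_f$ is a punctured $S^3$, that is, a boundary connected sum of finitely many copies of $S^2\times D^1$. A fibrewise computation then shows that each summand $S^2\times D^1$ contributes a linear $S^{m-2}$-bundle over $S^2$ to $M$, while the empty case gives $S^m$; hence $M$ is a standard sphere or a connected sum of total spaces of linear $S^{m-2}$-bundles over $S^2$. This settles $(m,n)=(4,3),(5,3),(6,3)$ and refines Theorem~\ref{thm:2}, whose homotopy-sphere alternative is standard here because $\Theta_5=\Theta_6=0$ and because for $m=4$ the case excluded there is precisely the exotic one. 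When $n=4$, $W_f$ is a simply-connected compact parallelizable $4$-manifold and, by Proposition~\ref{prop:5}, $H_2(W_f;\mathbb{Z})$ and $H_3(W_f;\mathbb{Z})$ are free. One wants to conclude that $W_f$ is a boundary connected sum of copies of $D^4$, of parallelizable linear $D^2$-bundles over $S^2$, and of $S^3\times D^1$; granting this and passing to boundaries of the linear $D^{m-n+1}$-bundles, the $D^4$-summand gives $S^m$, each $S^2$-summand a linear $S^{m-2}$-bundle over $S^2$, and the $S^3\times D^1$-summand gives $S^2\times S^3$ when $m=5$ (itself such a bundle) but $S^3\times S^3$ when $m=6$, which produces both lists. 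In practice this last structural step is replaced: for $(m,n)=(5,4)$ one combines the above constraints on $W_f$ with Barden's classification \cite{barden} of closed simply-connected $5$-manifolds, as in \cite{nishioka}; and for $(m,n)=(6,4)$ one argues similarly through the classification of the relevant closed simply-connected $6$-manifolds, as in \cite{kitazawa5}, where the additional summand diffeomorphic to $S^3\times S^3$ appears.

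The main obstacle is precisely this $4$-dimensional structural step: passing from the homological constraints on $W_f$ furnished by Propositions~\ref{prop:4} and \ref{prop:5} to a diffeomorphism classification of $W_f$ is delicate in the smooth $4$-dimensional setting, and this is what forces one to route the argument through the classification of the closed manifolds $M$ themselves. Once the possibilities for $W_f$ are in hand, the remaining verifications --- that $M$ is the asserted connected sum, that the disk and sphere bundles are linear (automatic since $m-n\le 3$), and that the special generic map can be chosen with $\bar f$ an embedding and product organized --- are routine, following the pattern of Example~\ref{ex:1}.
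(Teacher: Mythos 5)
You should first note that the paper contains no proof of Theorem \ref{thm:3}: it is imported verbatim from \cite{saeki1}, \cite{nishioka} and \cite{kitazawa5}, and the only internal commentary is the remark that Proposition \ref{prop:5}, together with Barden's classification \cite{barden} and Proposition \ref{prop:4}, is what yields the case $(m,n)=(5,4)$. So there is no in-paper argument to compare yours against line by line. Measured against the cited sources, your outline is the right one and your explicit computations check out: the ``if'' direction via Example \ref{ex:1}-type constructions with embedded $\bar f$ and product-organized maps; the ``only if'' direction via the linear $D^{m-n+1}$-bundle $W$ of Proposition \ref{prop:3} (linearity being available exactly because $m-n\le 3$), parallelizability and simple connectivity of $W_f$, the identification of $W_f$ with a punctured $S^3$ when $n=3$, and the fibrewise boundary computations producing $S^m$, the linear $S^{m-2}$-bundles over $S^2$, and $S^3\times S^3$ for $(6,4)$. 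Your observation that every linear $S^{m-2}$-bundle over $S^2$ is realized as the double of a linear $D^{m-2}$-bundle (surjectivity of ${\pi}_1(SO(m-2))\rightarrow {\pi}_1(SO(m-1))$) is the right reason the two directions match.

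The one place where your text is a roadmap rather than a proof is the ``only if'' direction for $n=4$, and you should be explicit that you have not closed it. You correctly identify that the structural decomposition of the smooth simply-connected parallelizable $4$-manifold $W_f$ is out of reach and that the actual proofs instead classify $M$ itself; but ``combine the constraints with Barden's classification'' still requires work you do not carry out: for $(5,4)$ one must extract from Propositions \ref{prop:4} and \ref{prop:5} that $H_2(M;\mathbb{Z})$ is free, control the invariant distinguishing the Barden manifolds (the $w_2$-type obstruction) to see which connected sums of $S^3$-bundles over $S^2$ occur, and then show all of them are realized; for $(6,4)$ one must run the Wall--Jupp--Zhubr classification, where the trilinear cup-product form and the first Pontrjagin class enter and where Proposition \ref{prop:6} is needed to kill the relevant cup products. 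None of this is in your write-up. Since the theorem is a quoted result, deferring to \cite{nishioka} and \cite{kitazawa5} for these steps is legitimate, but as a self-contained proof the $n=4$ half remains an appeal to authority rather than an argument.
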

\begin{Prop}
	\label{prop:6}
	Let $A$ be a commutative ring.
		In Proposition \ref{prop:2}, the cup product for a finite sequence $\{u_j\}_{j=1}^{l} \subset H^{\ast}(M;A)$ of length $l>0$ consisting of elements whose degrees are at most $m-n$ is the zero element if the sum of all $l$ degrees is greater than or equal to $n$.   
\end{Prop}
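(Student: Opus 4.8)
The plan is to transport the statement from $M$ to the $n$-dimensional manifold $W_f$ by means of the factorization $f=\bar{f}\circ q_f$ of Proposition \ref{prop:2}, exploiting the isomorphism range of Proposition \ref{prop:4}. First I would set $d_j:=\deg u_j$, so that $0\leq d_j\leq m-n$ for each $j$. By Proposition \ref{prop:4}, the homomorphism ${q_f}^{\ast}:H^{d_j}(W_f;A)\to H^{d_j}(M;A)$ is an isomorphism, hence there is a unique class $v_j\in H^{d_j}(W_f;A)$ with ${q_f}^{\ast}(v_j)=u_j$. Since the cup product is natural with respect to the continuous map $q_f$,
\[
{\cup}_{j=1}^{l} u_j={q_f}^{\ast}\left({\cup}_{j=1}^{l} v_j\right),
\]
where the factor on the right lies in $H^{d}(W_f;A)$ with $d:={\Sigma}_{j=1}^{l} d_j$. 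Note that nothing has to be arranged by hand here: each $v_j$ is forced and unique because ${q_f}^{\ast}$ is bijective in the relevant degree, and naturality does the rest.

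Next I would use that $W_f$ is an $n$-dimensional compact and connected smooth manifold \emph{with non-empty boundary}. The boundary is non-empty because a special generic map on a closed manifold always has singular points: otherwise $f$ would be a submersion, hence an open map, so $f(M)$ would be a non-empty, open and compact --- therefore clopen and proper --- subset of the connected non-compact space ${\mathbb{R}}^n$, which is absurd; and $q_f$ carries $S(f)$ onto $\partial W_f$. It is classical that a compact connected smooth manifold with non-empty boundary has the homotopy type of a CW complex of dimension at most $n-1$ (for instance, it admits a handle decomposition with handles of index at most $n-1$, the collar of the boundary contributing no cells). Consequently $H^{j}(W_f;A)$ is the trivial group for every $j\geq n$ and every coefficient ring $A$. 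Since $d={\Sigma}_{j=1}^{l} d_j\geq n$ by hypothesis, we obtain ${\cup}_{j=1}^{l} v_j=0$ in $H^{d}(W_f;A)$, and therefore ${\cup}_{j=1}^{l} u_j={q_f}^{\ast}(0)=0$, as desired.

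I do not expect any genuine obstacle; the argument is essentially formal once Propositions \ref{prop:2} and \ref{prop:4} are in hand. The only point deserving a line of care is the vanishing $H^{j}(W_f;A)=0$ for $j\geq n$: for $j>n$ this is automatic since $W_f$ is an $n$-manifold, and for $j=n$ one may invoke the homotopy-type statement above, or, in the $A$-orientable case, Poincar\'e--Lefschetz duality $H^{n}(W_f;A)\cong H_0(W_f,\partial W_f;A)=0$, the former route handling arbitrary coefficients and the non-orientable case uniformly. The remaining steps are immediate from the naturality of cup products.
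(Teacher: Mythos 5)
Your proposal is correct and follows essentially the same route as the paper: pull each class back along the isomorphism ${q_f}^{\ast}$ of Proposition \ref{prop:4}, use naturality of the cup product, and conclude from the fact that the compact $n$-manifold $W_f$ with non-empty boundary has the homotopy type of an at most $(n-1)$-dimensional complex, so its cohomology vanishes in degrees $\geq n$. Your added justifications (that $S(f)\neq\emptyset$ forces $\partial W_f\neq\emptyset$, and the handle-decomposition argument for the homotopy dimension) are correct details the paper leaves implicit.
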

We introduce a proof of Propositions \ref{prop:4} and \ref{prop:6} referring to \cite{kitazawa1}.
For terminologies from the PL category and similar categories, see \cite{hudson} for example.
\begin{proof}[A proof of Propositions \ref{prop:4} and \ref{prop:6}]
	This is based on an argument of \cite{saekisuzuoka} for example. We abuse the notation of some Propositions such as Proposition \ref{prop:2} and \ref{prop:3}.
	
	$W$ is an ($m+1$)-dimensional compact and connected (PL) manifold whose boundary is $M$ and collapses to $W_f$, which is an $n$-dimensional compact smooth manifold immersed smoothly into ${\mathbb{R}}^n$. This has the (simple) homotopy type of an ($n-1$)-dimensional polyhedron and more precisely, collapses to an ($n-1$)-dimensional polyhedron. We can see that for $0 \leq j < m+1-(n-1)-1=m-n+1$, the three kinds of homomorphisms in Proposition \ref{prop:4} are isomorphisms. 
	
	In Proposition \ref{prop:6}, the cup product for the finite sequence is regarded as the value of ${q_f}^{\ast}:H^{\ast}(W_f;A) \rightarrow H^{\ast}(M;A)$ at the cup product of a sequence of length $l$ where ${q_f}^{\ast}:H^{\ast}(W_f;A) \rightarrow H^{\ast}(M;A)$ is defined in a canonical way from ${q_f}^{\ast}:H^{i}(W_f;A) \rightarrow H^{i}(M;A)$. This sequence of elements of $H^{\ast}(W_f;A)$ is defined in a canonical and unique way by Proposition \ref{prop:4}. The cup product is the zero element of $H^{\ast}(W_f;A)$. This follows from the fact that $W_f$ has the homotopy type of an ($n-1$)-dimensional polyhedron.
	
	This completes the proof.

\end{proof}
Hereafter, in the present section, we introduce arguments in \cite{kitazawa7} and present some new arguments.

\begin{Prop}[Partially discussed in \cite{kitazawa7}]
	\label{prop:7}
Let $m \geq n \geq 1$ be integers. Let ${\bar{f}}_N:\bar{N} \rightarrow {\mathbb{R}}^n$ be a smooth immersion of an $n$-dimensional compact, connected and orientable manifold $\bar{N}$.
Then there exist a suitable closed and connected smooth manifold $M$ and a special generic map $f:M \rightarrow {\mathbb{R}}^n$ enjoying the following properties
where we abuse the notation in Proposition \ref{prop:2}.
	\begin{enumerate}
	\item $W_f=\bar{N}$ and $\bar{f}={\bar{f}}_N$.
	\item Two bundles in Proposition \ref{prop:2} {\rm (}\ref{prop:2.2}{\rm )} and {\rm (}\ref{prop:2.3}{\rm )} are trivial bundles.
	
	\item Let $f_0:=f$. Then there exist a special generic map $f_j:M \rightarrow {\mathbb{R}}^{n+j}$ for $0 \leq j \leq m-n$ and a smooth immersion $f_j:M \rightarrow {\mathbb{R}}^{n+j}$ for $j \geq m-n$. If $f_N$ is an embedding, then each immersion can be obtained as an embedding. Furthermore, they can be constructed enjoying the relation $f_{j_1}={\pi}_{n+j_2,n+j_1} \circ f_{j_2}$ for any distinct integers $j_1<j_2$ and a canonical projection ${\pi}_{n+j_2,n+j_1};{\mathbb{R}}^{n+j_2} \rightarrow {\mathbb{R}}^{n+j_1}$.
\end{enumerate}
\end{Prop}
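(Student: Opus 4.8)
The plan is to realise $M$ as (a pushed-in copy of) the boundary of the \emph{trivial} disk bundle over $\bar{N}$ and to extract the map $f$, its two bundles, and the whole family $\{f_j\}$ from one product picture. Concretely, I would fix $W_f := \bar{N}$, put $V := W_f \times D^{m-n+1}$ --- a compact $(m+1)$-manifold with corner along $\partial W_f \times S^{m-n}$ --- smooth the corner, and let $M \subset \mathrm{Int}\, V$ be $\partial V$ pushed slightly inward along a collar, so that $M$ is a closed connected smooth $m$-manifold and the projection $V \to W_f$ restricts to a smooth map $q_f : M \to W_f$. Set $f := \bar{f}_N \circ q_f$. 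Since $\bar{f}_N$ is an immersion, $S(f) = S(q_f)$; over $\mathrm{Int}\, W_f$ the map $q_f$ is a projection with fibre $S^{m-n}$, hence a submersion, so $S(f)$ lies over $\partial W_f$. The single computation that does the work is the local model of $q_f$ over a collar $\partial W_f \times [0,1]$ of $\partial W_f$: there $q_f^{-1}(\partial W_f \times [0,1])$ is diffeomorphic to $\partial W_f \times D^{m-n+1}$ with $q_f$ becoming $(\xi,z) \mapsto (\xi,|z|^2)$. This simultaneously shows that $S(f) = \partial W_f \times \{0\}$, that $f|_{S(f)}$ is the restriction of $\bar{f}_N$ to $\partial W_f$ --- hence an immersion, and an embedding when $\bar{f}_N$ is --- that in suitable charts $f$ has the quadratic normal form $(y,z) \mapsto (y,|z|^2)$ required of a special generic map, and that the two bundles of Proposition~\ref{prop:2} are here the trivial bundles $\partial W_f \times D^{m-n+1} \to \partial W_f$ and $(W_f - \mathrm{Int}\, N(\partial W_f)) \times S^{m-n} \to W_f - \mathrm{Int}\, N(\partial W_f)$. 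This yields the properties listed in (1) and (2), and $M$ is orientable, being $\partial V$.

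For (3) I would produce every $f_j$ from a single immersion, so that the compatibility relations hold on the nose. Let $g : M \to \mathbb{R}^{m+1} = \mathbb{R}^n \times \mathbb{R}^{m-n+1}$ be the restriction to $M \subset \mathrm{Int}\, V$ of the immersion $\bar{f}_N \times \mathrm{id}_{D^{m-n+1}} : W_f \times D^{m-n+1} \to \mathbb{R}^n \times \mathbb{R}^{m-n+1}$; it is an immersion, and an embedding if $\bar{f}_N$ is. For $j \geq m-n+1$ define $f_j$ as $g$ followed by the standard linear inclusion $\mathbb{R}^{m+1} \hookrightarrow \mathbb{R}^{n+j}$, which is an immersion (embedding when $\bar{f}_N$ is). For $0 \leq j \leq m-n$, let $\pi_j : \mathbb{R}^{m+1} \to \mathbb{R}^{n+j}$ forget the last $m-n+1-j$ coordinates and set $f_j := \pi_j \circ g$; using the product decomposition $V = (W_f \times D^j) \times D^{m-n+1-j}$ one recognises $f_j$ as the map produced by the construction above with $W_f$ replaced by $W_f \times D^j$ and $\bar{f}_N$ by $\bar{f}_N \times \mathrm{id}_{D^j}$, so it is special generic with $W_{f_j} = W_f \times D^j$ and trivial associated bundles; in particular $f_0 = f$, while $f_{m-n} : M \to \mathbb{R}^m$ is the special generic (and hence non-immersive) codimension-$0$ map. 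Since each $f_j$ is $g$ composed with a coordinate projection or a linear inclusion, the relations $f_{j_1} = \pi_{n+j_2,n+j_1} \circ f_{j_2}$ for $j_1 < j_2$ follow by composing coordinate projections.

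\textbf{The main obstacle} is the corner-smoothing bookkeeping: I must verify that after smoothing the corner of $V = W_f \times D^{m-n+1}$ along $\partial W_f \times S^{m-n}$ the resulting boundary genuinely admits the collar description $q_f^{-1}(\partial W_f \times [0,1]) \cong \partial W_f \times D^{m-n+1}$ with $q_f = (\xi,z) \mapsto (\xi,|z|^2)$ --- equivalently, that the standard model for smoothing the corner of $[0,\infty) \times D^{m-n+1}$ has boundary on which the $[0,\infty)$-coordinate restricts to a nondegenerate quadratic form in the $D^{m-n+1}$-directions --- and, crucially, that one fixed choice of $M \subset \mathrm{Int}\, V$ and of collar serves all the product decompositions $V = (W_f \times D^j) \times D^{m-n+1-j}$ simultaneously, so that the single immersion $g$ really does project or restrict to each $f_j$. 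Once this model computation is pinned down, the special generic normal form, the triviality of the two bundles (which were built in by construction), and the compatibility of the family $\{f_j\}$ are all routine verifications.
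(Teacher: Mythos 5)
Your construction is correct and essentially coincides with the paper's: the paper builds $M$ by gluing a ``folded'' trivial $D^{m-n+1}$-bundle over a collar of $\partial \bar{N}$ to a trivial $S^{m-n}$-bundle over the rest, which is exactly the (corner-smoothed) boundary of $\bar{N} \times D^{m-n+1}$ you use, and it obtains the family $f_j$ by likewise replacing the fibre maps with canonical projections of the sphere to ${\mathbb{R}}^{1+j}$ and ${\mathbb{R}}^{j}$ (and with embeddings of the sphere for $j>m-n$). Your repackaging through the single immersion $g:M \rightarrow {\mathbb{R}}^{m+1}$ followed by coordinate projections or inclusions is a mild improvement in presentation, since it makes the relations $f_{j_1}={\pi}_{n+j_2,n+j_1} \circ f_{j_2}$ automatic rather than something to be checked after each gluing.
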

\begin{proof}
	Most of our proof is based on a proof in the original paper.
	
	By considering the restriction of the canonical projection of a unit sphere to $\mathbb{R}$ to a suitable hemisphere, we have a Morse function on a copy of the unit disk. This function can be obtained enjoying the following properties.
	\begin{itemize}
		\item It has exactly one singular point and there the function has maximal value (minimal value).
		\item The preimage of the minimal (resp. maximal) value is the boundary of the disk.
		\end{itemize}
	We can prepare the product map of such a function and the identity map on the boundary $\partial \bar{N}$ and we have a surjection onto $N(\partial W_f)$, a suitable collar neighborhood of $\partial \bar{N}$, by restricting the manifold of the target of the product map and composing a suitable embedding. 
	We have a smooth trivial bundle over $\bar{N}-{\rm Int}\ N(\partial W_f)=W_f-{\rm Int}\ N(\partial W_f)$ whose fiber is an ($m-n$)-dimensional standard sphere. By gluing these maps in a suitable way we have a special generic map $f=f_0$ enjoying the first two properties.
	
	\begin{itemize}
		\item The diffeomorphism agreeing with the diffeomorphism used for identification between the connected component in the boundary of $\partial N(\partial W_f)$ and the boundary $\partial W_f -{\rm Int}\ N(\partial W_f)$. 
		\item The identity map on the fiber, which is the unit sphere $S^{m-n}$. Note that the fibers of the both trivial bundles can be regarded as the unit sphere $S^{m-n}$. 
	\end{itemize}

    Let $0 \leq j \leq m-n$.
	We replace the canonical projection of the unit sphere to $\mathbb{R}$ in the former product map by a suitable one onto ${\mathbb{R}}^{1+j}$. For the projection of the latter trivial smooth bundle, we replace the projection by the product map of a canonical projection of the unit sphere to ${\mathbb{R}}^j$ and the identity map on the base space. We glue in a natural and suitable way to have a desired map $f_j$. 
	
	In the case $j>m-n$, for construction of a desired map $f_j$, we consider the canonically defined embeddings of the unit spheres instead. This argument is a new one, presented first in the present paper.
	
	This completes the proof of Proposition \ref{prop:7}, and by the construction, Proposition \ref{prop:8}, which is presented later. 
\end{proof}
\begin{Def}[Partially defined in \cite{kitazawa2}]
	\label{def:2}
	We define the special generic map $f=f_0$ obtained in this way a {\it product-orginized} special generic map.
\end{Def}

\begin{Prop}
	\label{prop:8}
	For $0 \leq j \leq m-n$, $f_j$ in Proposition \ref{prop:7} can be also constructed as a  product-organized special generic map.
\end{Prop}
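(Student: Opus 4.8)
The plan is to read the assertion off directly from the construction carried out in the proof of Proposition \ref{prop:7}. Recall that, by Definition \ref{def:2}, a product-organized special generic map into ${\mathbb{R}}^{n^{\prime}}$ is by definition one produced by the recipe of that proof from a smooth immersion of an $n^{\prime}$-dimensional compact, connected and orientable manifold into ${\mathbb{R}}^{n^{\prime}}$: one glues the product of a suitable ``folded hemisphere'' map on a disk with the identity on the boundary manifold to a trivial sphere bundle over the complement of a collar neighborhood. Fixing $j$ with $0 \leq j \leq m-n$, it therefore suffices to exhibit $f_j \colon M \rightarrow {\mathbb{R}}^{n+j}$ as such an output, now for the target dimension $n+j$.

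First I would determine the Stein factorization of $f_j$. By the construction, over the collar region $f_j$ is the product of (the restriction to a hemisphere $D^{m-n+1}$ of a canonical projection of a unit sphere, taken into ${\mathbb{R}}^{1+j}$) with the identity on $\partial \bar{N}$, and over the complement it is the product of a canonical projection $S^{m-n} \rightarrow {\mathbb{R}}^{j}$ with the identity on $W_f-{\rm Int}\ N(\partial W_f)$; these canonical projections are themselves the simplest special generic maps, with $D^{1+j}$ and $D^{j}$ as their Stein factorizations. Gluing the two product pieces, together with the identification of $W_f=\bar{N}$ as the union of its collar and its complement, I expect to obtain
\[
W_{f_j} \cong \bar{N} \times D^{j}, \qquad \bar{f}_j = {\bar{f}}_N \times {\iota}_j \colon \bar{N} \times D^{j} \longrightarrow {\mathbb{R}}^{n} \times {\mathbb{R}}^{j} = {\mathbb{R}}^{n+j},
\]
where ${\iota}_j \colon D^{j} \hookrightarrow {\mathbb{R}}^{j}$ is the standard inclusion and $\bar{f}_j$ denotes the immersion of Proposition \ref{prop:2} associated to $f_j$. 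Here $\bar{N} \times D^{j}$ is a compact, connected and orientable $(n+j)$-dimensional manifold and ${\bar{f}}_N \times {\iota}_j$ is a smooth immersion, which is an embedding if ${\bar{f}}_N$ is; so it is a legitimate input to the recipe.

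Second I would verify that the two bundles of Proposition \ref{prop:2} attached to $f_j$ are trivial: the $D^{m-n-j+1}$-bundle near $\partial W_{f_j}=\partial(\bar{N} \times D^{j})$ and the smooth $S^{m-n-j}$-bundle over the complement of a collar of it. Both follow at once from the construction, since each piece was assembled from the trivial sphere and disk bundles produced for $f_0=f$ in Proposition \ref{prop:7} together with the canonically trivialized sphere subbundles of the canonical projections of the unit spheres used in the replacements. Granting the identification of $W_{f_j}$ and this triviality, $f_j$ is exactly an output of the recipe of the proof of Proposition \ref{prop:7}, hence a product-organized special generic map, as claimed. I expect the first step to be the only delicate point: organizing the gluing so as to identify $W_{f_j}$ with $\bar{N} \times D^{j}$ after smoothing corners, and in particular recognizing that the portion of the complement piece lying over $\bar{N} \times \partial D^{j}$ merges into the disk-bundle region, i.e. into the collar of $\partial W_{f_j}$, rather than into the sphere-bundle region. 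Once the dimension bookkeeping around $\partial(\bar{N} \times D^{j})$ is set up correctly, the rest is routine.
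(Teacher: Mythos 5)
Your proposal is correct and follows essentially the same route as the paper, which disposes of Proposition \ref{prop:8} with the single closing remark in the proof of Proposition \ref{prop:7} that it holds ``by the construction''; your identification $W_{f_j} \cong \bar{N} \times D^{j}$ with ${\bar{f}}_{j}={\bar{f}}_N \times {\iota}_j$ and trivial bundles is exactly the content of that remark, made explicit. The only point to watch is the extreme case $j=m-n$, where the fiber $S^0$ is disconnected, so $\bar{N} \times D^{m-n}$ is a factorization in the sense of Proposition \ref{prop:2} (exhibiting $M$ as the double of $\bar{N} \times D^{m-n}$) rather than the literal Stein quotient; this does not affect the conclusion.
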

We can generalize ${\mathbb{R}}^n$ here to a general connected smooth manifold with no boundary. However, we concentrate on the cases of ${\mathbb{R}}^n$ essentially.

For a linear bundle over a base space $X$, we can define the {\it $j$-th Stiefel-Whitney class} as an element of $H^j(X;\mathbb{Z}/2\mathbb{Z})$ and the {\it $j$-th Pontrjagin class} as an element of $H^{4j}(X;\mathbb{Z})$.

The tangent bundles of smooth manifolds are important linear bundles whose fibers are Euclidean spaces. 
 
The {\it $j$-th Stiefel-Whitney class} of a smooth manifold $X$ is defined as that of its tangent bundle and an element of $H^j(X;\mathbb{Z}/2\mathbb{Z})$.
The {\it $j$-th Pontrjagin class} of a smooth manifold $X$ is defined as that of its tangent bundle and an element of $H^{4j}(X;\mathbb{Z})$.

The following follows from elementary arguments related to this.

 \begin{Prop}
 		\label{prop:9}
 		Let $j>0$ be an arbitrary integer.
 	For the manifold $M$ and any positive integer $j$, the $j$-th Stiefel-Whitney class and the $j$-th Pontrjagin class of $M$ are the zero elements of $H^j(M;\mathbb{Z}/2\mathbb{Z})$ and $H^{4j}(M;\mathbb{Z})$, respectively.
 \end{Prop}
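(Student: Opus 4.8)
The plan is to prove the stronger statement that the tangent bundle $TM$ is stably trivial, and then to invoke the stability of the characteristic classes. First I would unwind which manifold is meant: by the discussion immediately preceding the statement, $M$ is the closed connected smooth manifold furnished by Proposition \ref{prop:7}, so it carries a product-organized special generic map $f=f_0\colon M\to\mathbb{R}^n$ for which, in the notation of Proposition \ref{prop:2}, one has $W_f=\bar N$ and $\bar f=\bar f_N$, and both bundles of Proposition \ref{prop:2}~(\ref{prop:2.2}) and (\ref{prop:2.3}) are trivial.

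Using Proposition \ref{prop:3} together with this triviality, I would take the compact manifold $W$ there to be the smooth product $W_f\times D^{m-n+1}$, whose boundary is $M=(\partial W_f\times D^{m-n+1})\cup\bigl((W_f-{\rm Int}\,N(\partial W_f))\times S^{m-n}\bigr)$; this is precisely the decomposition of $M$ into the $q_f$-preimages of the collar and of its complement, equipped with the two trivial bundles of Proposition \ref{prop:2}. Next I would compute $TW$. Since $\bar f_N\colon W_f\to\mathbb{R}^n$ is an immersion of an $n$-dimensional manifold into $\mathbb{R}^n$, its differential is a fibrewise isomorphism, so $TW_f\cong\bar f_N^{\ast}T\mathbb{R}^n$ is trivial; as $TD^{m-n+1}$ is trivial as well, $TW$, being the direct sum of the pullbacks of $TW_f$ and of $TD^{m-n+1}$, is trivial. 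The normal bundle of $M=\partial W$ in $W$ is trivial (a collar neighborhood), hence $TM\oplus\varepsilon^{1}\cong TW|_M$ is trivial, i.e.\ $TM$ is stably parallelizable.

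It then remains only to recall that the Stiefel--Whitney and Pontrjagin classes are stable characteristic classes: for every integer $j>0$ one gets $w_j(M)=w_j(TM)=w_j(TM\oplus\varepsilon^{N})=0$ in $H^j(M;\mathbb{Z}/2\mathbb{Z})$, and likewise $p_j(M)=0$ in $H^{4j}(M;\mathbb{Z})$, which is the assertion. I do not anticipate a real obstacle; the only slightly delicate bookkeeping is verifying that the total space $W$ of Proposition \ref{prop:3} can genuinely be taken to be a product $W_f\times D^{m-n+1}$, and not merely a manifold that is trivial over the collar and over its complement separately — this is checked by tracking the gluing in the construction that proves Proposition \ref{prop:7}. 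A variant that avoids $W$ entirely: the immersion $\bar f_N$ makes $M$ immerse into $\mathbb{R}^n\times\mathbb{R}^{m-n+1}=\mathbb{R}^{m+1}$ with codimension one, and $M$ is orientable (being $\partial(W_f\times D^{m-n+1})$ with $W_f$ orientable), so the resulting normal line bundle is trivial and again $TM\oplus\varepsilon^{1}$ is trivial.
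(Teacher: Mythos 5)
Your proof is correct and supplies exactly the ``elementary arguments'' the paper alludes to without writing down: the paper gives no explicit proof of Proposition \ref{prop:9}, and the standard argument is precisely yours --- a product-organized special generic map exhibits $M$ as the boundary of the parallelizable manifold $W_f\times D^{m-n+1}$ (equivalently, by Proposition \ref{prop:7}, immerses $M$ into ${\mathbb{R}}^{m+1}$ with trivial normal line bundle), so $TM$ is stably trivial and the stable classes $w_j$ and $p_j$ vanish for $j>0$. Both of your variants are sound; the codimension-one immersion route is the cleaner one, since it sidesteps the bookkeeping needed to verify that $W$ is genuinely the product $W_f\times D^{m-n+1}$.
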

 
See \cite{milnorstasheff} for Stiefel-Whitney classes and Pontrjagin classes. 

\section{Main Theorems and related arguments.}
We prove Main Theorems. We need some arguments and results of the author. For example, ones in \cite{kitazawa2,kitazawa3,kitazawa4,kitazawa5,kitazawa6} are important.

\begin{Prop}[\cite{kitazawa2,kitazawa3}]
	\label{prop:10}
	Let $n \geq 5$ be an integer.
	Let $G$ be a finite commutative group. 
We have an $n$-dimensional compact and simply-connected manifold ${\bar{N}}_{G,n}$ enjoying the following properties.
\begin{enumerate}
	\item ${\bar{N}}_{G,n}$ is smoothly embedded in ${\mathbb{R}}^n$.
	\item The boundary $\partial {\bar{N}}_{G,n}$ is connected.
	\item $H_{n-3}({\bar{N}}_{G,n};\mathbb{Z})$ is isomorphic to $G$.
	\item $H_j({\bar{N}}_{G,n};\mathbb{Z})$ is the trivial group for $j \neq 0,n-3$.
\end{enumerate}
\end{Prop}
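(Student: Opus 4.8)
The plan is to realise $\bar N_{G,n}$ directly, by attaching handles to a round $n$-disk sitting inside $\mathbb R^n$, using a square presentation of the finite abelian group $G$. If $G$ is trivial we simply take $\bar N_{G,n}=D^n$; otherwise write $G\cong\bigoplus_{i=1}^{r}\mathbb Z/k_i\mathbb Z$ with integers $k_i\ge 2$, which is the same as fixing a short exact sequence $0\to\mathbb Z^{r}\xrightarrow{\operatorname{diag}(k_1,\dots,k_r)}\mathbb Z^{r}\to G\to 0$.

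Fix $i$. Inside a small open ball $B_i\subset\mathbb R^n$ take the standard sphere $S^{n-3}=\partial D^{n-2}_{0}$, where $D^{n-2}_{0}$ is a flat $(n-2)$-disk in $B_i$; since $D^{n-2}_{0}$ is contractible its normal $2$-disk bundle in $\mathbb R^n$ is trivial, so $S^{n-3}$ has a tubular neighbourhood $T_i\cong S^{n-3}\times D^{3}$ in $B_i$, with $\partial T_i\cong S^{n-3}\times S^{2}$. For $k_i$ distinct unit vectors $v_{i,1},\dots,v_{i,k_i}$ in the normal $\mathbb R^2$-direction, the parallel copies $\Sigma_{i,l}:=S^{n-3}\times\{\epsilon v_{i,l}\}\subset\partial T_i$ are pairwise disjoint, each represents a generator of $H_{n-3}(T_i;\mathbb Z)\cong\pi_{n-3}(T_i)\cong\mathbb Z$, and each visibly bounds the obvious parallel translate of $D^{n-2}_{0}$, which (after a small collar adjustment near its boundary) is properly and smoothly embedded in the closure of $B_i\setminus T_i$; these $k_i$ disks can be taken pairwise disjoint. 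Here the hypothesis $n\ge 5$ — hence $n-3\ge 2$ — lets me form the orientation-compatible connected sum $\Sigma_i$ of $\Sigma_{i,1},\dots,\Sigma_{i,k_i}$ along disjoint embedded tubes in $\partial T_i$ and, simultaneously, the corresponding boundary connected sum $\Delta_i$ of the $k_i$ disks along matching tubes in the complement; then $\Delta_i$ is a properly and smoothly embedded $(n-2)$-disk in $\overline{B_i\setminus T_i}$ with $\partial\Delta_i=\Sigma_i$ and $[\Sigma_i]=k_i$ times the generator of $H_{n-3}(T_i;\mathbb Z)$. A tubular neighbourhood $\Delta_i\times D^{2}$ of $\Delta_i$ is an $(n-2)$-handle $h_i$ attached to $T_i$ along $\Sigma_i\times D^{2}\subset\partial T_i$, realised entirely inside $B_i$; set $N_i:=T_i\cup h_i\subset B_i$. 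Finally define $\bar N_{G,n}$ to be the boundary connected sum of $N_1,\dots,N_r$, which embeds smoothly in $\mathbb R^n$ by placing the $N_i$ in disjoint balls and joining them along boundary tubes.

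It then remains to read off the asserted properties. Each $T_i$ collapses to $S^{n-3}$ and the attaching map of $h_i$ has degree $k_i$, so $N_i$ has the homotopy type of the Moore space $S^{n-3}\cup_{k_i}e^{n-2}$ (the $(n-2)$-cell attached by a map of degree $k_i$); hence $\bar N_{G,n}\simeq\bigvee_{i=1}^{r}\bigl(S^{n-3}\cup_{k_i}e^{n-2}\bigr)$, a Moore space of type $M(G,n-3)$, so $H_{n-3}(\bar N_{G,n};\mathbb Z)\cong G$ and $H_j(\bar N_{G,n};\mathbb Z)=0$ for $j\ne 0,n-3$. Since $n-3\ge 2$, each $S^{n-3}\times D^{3}$ is simply-connected, and attaching the $(n-2)$-handles (of index $\ge 3$) and forming boundary connected sums of simply-connected pieces with connected boundary keep the result simply-connected; moreover the attaching spheres of all handles of positive index have codimension $\ge 2$ in the relevant boundaries, so surgery along them does not disconnect and $\partial\bar N_{G,n}$ stays connected.

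The step I expect to be the main obstacle is exactly the embeddability in $\mathbb R^n$: one has to check carefully that the connected sum $\Sigma_i$ genuinely bounds a properly embedded $(n-2)$-disk $\Delta_i$ in the complement of the tube $T_i$, and that the connected sums forming the $\Sigma_i$, the pairwise disjointness of the $\Delta_i$, and the boundary connected sum producing $\bar N_{G,n}$ can all be carried out compatibly while staying embedded. These are general-position verifications that are unproblematic precisely because $n\ge 5$ leaves enough room (arcs joining the relevant submanifolds may be taken embedded and pairwise disjoint, and the connected sums of the $(n-3)$-spheres make sense), but they are where the care is needed; by contrast, once the handle picture is in place, the homotopy-theoretic identification of $\bar N_{G,n}$ with $M(G,n-3)$ and hence the computation of its (co)homology, simple-connectivity and connectedness of the boundary are immediate.
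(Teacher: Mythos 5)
Your construction is correct, but it is genuinely different from the one in the paper. The paper's proof reduces to cyclic $G$, invokes Wall's theorem that closed $3$-manifolds embed in ${\mathbb{R}}^5$ to place a lens space $Y_G$ with ${\pi}_1(Y_G)\cong G$ inside $S^n$, takes the closure of the complement of a tubular neighborhood $N(Y_G)\cong Y_G\times D^{n-3}$, computes its homology and fundamental group via Mayer--Vietoris, Alexander-type duality and Seifert--van Kampen, and then glues on a copy of $D^3\times D^{n-3}$ (the restriction of the bundle to an embedded $D^3\subset Y_G$) to kill the residual class in degree $n-4$ and the residual fundamental group before forming boundary connected sums. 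You instead build an embedded regular neighborhood of a Moore space $M(G,n-3)$ by hand: a trivial tube $S^{n-3}\times D^3$ plus an ambient $(n-2)$-handle attached along a degree-$k_i$ sphere obtained as a band sum of parallel copies of the core, the handle being a relative tubular neighborhood of the boundary-connected-sum of the parallel Seifert disks. Your route is more elementary and self-contained (no $3$-manifold topology, no appeal to Wall's embedding theorem, and the homology is read off immediately from the Moore-space cell structure), at the price of the general-position and framing verifications you correctly single out (compatibility of the tubes in $\partial T_i$ with the bands joining the disks in the complement, triviality of the relevant normal bundles, corner smoothing); these are all routine for $n\geq 5$. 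The paper's route, by contrast, matches the construction of ${\bar{N}}_{G,n}$ actually used in the cited earlier papers and yields the same manifolds in a form compatible with the rest of the author's machinery. Both arguments establish all five asserted properties, including simple connectivity (your handles have index $n-2\geq 3$, and the tube is simply connected since $n-3\geq 2$) and connectedness of the boundary (surgery along a codimension-$2$ attaching sphere does not disconnect).
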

\begin{proof}
	We prove this referring to \cite{kitazawa3} and in a different way. However, the method is essentially same.
	
	First Let $G$ be a cyclic group.
	We can choose a 3-dimensional closed and connected manifold $_G$ smoothly embedded in ${\mathbb{R}}^n$ such that ${\pi}_1(Y_G)$ and $H_1(Y_G;\mathbb{Z})$ are isomorphic to $G$ and $H_2(Y_G;\mathbb{Z})$ is the trivial group. This is due to \cite{wall} and for example, we can choose a so-called {\it Lens space}.
	
	We remove the interior of a small closed tubular neighborhood $N(Y_G)$ of $Y_G$ and ${S^n}_{N(Y_G)}$ denotes the resulting manifold. $N(Y_G)$ is regarded as the total space of a trivial linear bundle over $Y_G$ whose fiber is the ($n-3$)-dimensional unit disk $D^{n-3}$.
	$\partial {S^{n}}_{N(Y_G)}$ is regarded as the total space of the subbundle of the previous bundle whose fiber is $\partial D^{n-3} \subset D^{n-3}$.
	
	We have a Mayer-Vietoris sequence
	
	$$\rightarrow H_j(\partial {S^n}_{Y_G};\mathbb{Z})=H_j(\partial N(Y_G);\mathbb{Z}) \rightarrow H_j(N(Y_G);\mathbb{Z}) \oplus H_j({S^n}_{N(Y_G)};\mathbb{Z}) \rightarrow H_j(S^n;\mathbb{Z}) \rightarrow$$
	\ \\
	and the homomorphism from the first group to the second group is an isomorphism for $1 \leq j \leq n-1$.
	This is due to the fundamental fact that $H_j(S^n;\mathbb{Z})$ is isomorhic to $\mathbb{Z}$ for $j=0,n$ and the trivial group for $1 \leq j \leq n-1$ and that the homomorphism from $H_n(S^n;\mathbb{Z})$ into $H_{n-1}(\partial {S^n}_{N(Y_G)};\mathbb{Z})$ is an isomorphism between groups isomorphic to $\mathbb{Z}$.
	
	${S^n}_{N(Y_G)} \supset \partial {S^n}_{N(Y_G)}$ are a trivial linear bundle over $Y_G$ whose fiber is the unit disk $D^{n-3}$ and its subbundle obtained by considering the fiber $S^{n-4}=\partial D^{n-3} \subset D^{n-3}$, respectively.
	
	 ${S^n}_{N(Y_G)}$ is a connected manifold.
	 $H_j({S^n}_{N(Y_G)};\mathbb{Z})$ is isomorphic to $H_{n-1}(Y_G;\mathbb{Z})$ for $1 \leq j \leq n-2$ and the trivial group for $j=n-1,n$.
	 
	 ${\pi}_1(\partial {S^n}_{N(Y_G)})$ is isomorphic to the direct sum $\mathbb{Z} \oplus G$ in the case $n=5$ and $G$ in the case $n>5$ and commutative. $S^n$ is simply-connected. For $S^n$ and the same submanifolds $N(Y_G)$ and ${S^n}_{N(Y_G);\mathbb{Z}}$, we can apply Seifert van-Kampen theorem. ${\pi}_1({S^n}_{N(Y_G)})$ is shown to be isomorphic to $\mathbb{Z}$.
	 
	 We define
	 ${\bar{N}}_G$ as a manifold in the following way. First attach a copy of $D^3 \times D^{n-3}$ so that this is the total space of the restriction of the linear bundle $N(Y_G)$ to a copy of the $3$-dimensional unit disk $D^3$ smoothly embedded in the base space $Y_G$. After that eliminate the corner. 
	 
	 We have a Mayer-Vietoris sequence for ${\bar{N}}_G$ and its decomposition into the two manifolds, one of which is ${S^n}_{N(Y_G)}$ and the other of which is the copy of $D^3 \times D^{n-3}$. More precisely, the resulting manifold is decomposed by an ($n-1$)-dimensional smoothly embedded manifold diffeomorphic to $D^3 \times \partial D^{n-3}$ into the two manifold.
	 We can also apply Seifert van-Kampen theorem for these manifolds. 
	 Investigating our Mayer-Vietoris sequence and our argument using Seifert van-Kampen theorem, this completes our proof in the case of a cyclic group $G$.
	 
	 For a general finite commutative group $G$, we have a desired manifold by considering a boundary connected sum of finitely many such manifolds forming the family $\{{\bar{N}}_{G_j}\}_{j=1}^l$ such that each $G_j$ is a cyclic group and that the direct sum ${\oplus}_{j=1}^l G_j$ is isomorphic to $G$. We add that the boundary connected sum is taken in the smooth category. 
	 
	 This completes the proof. 
\end{proof}

We have the following by applying Proposition \ref{prop:10} and other arguments such as`Propositions \ref{prop:7}, \ref{prop:8} and \ref{prop:9}.

\begin{Thm}[\cite{kitazawa5} ($(m,n)=(6,5)$)]
	\label{thm:4}
	Let $m \geq n+1$ be an integer. Let $G$ be a finite commutative group. We have an $m$-dimensional closed and simply-connected manifold $M_{G,n,m-n}$ and a product-organized special generic map $f_{G,n,m-n}:M_{G,n,m-n} \rightarrow {\mathbb{R}}^n$ enjoying the following properties.
	\begin{enumerate}
		\item \label{thm:4.1}
		 For any positive integer $j$, the $j$-th Stiefel-Whitney class and the $j$-th Pontrjagin class of $M_{G,n,m-n}$ are the zero elements of $H^j(M_{G,n,m-n};\mathbb{Z}/2\mathbb{Z})$ and $H^{4j}(M_{G,n,m-n};\mathbb{Z})$, respectively.
		\item \label{thm:4.2}
		 The restriction of the map to the singular set is an embedding. The image is diffeomorphic to ${\bar{N}}_G$ in Proposition \ref{prop:10} and $W_{f_{G,n,m-n}}$ where $W_{f_{G,n,m-n}}$ abuse "$W_f$ in Proposition \ref{prop:2}".
		\item \label{thm:4.3}
		$H_j(M_{G,n,m-n};\mathbb{Z})$ is isomorphic to $G$ for $j=n-3,m-n+2$, $\mathbb{Z}$ for $j=0,m$, and the trivial group otherwise.
	\end{enumerate}
Furthermore, according to \cite{jupp,wall2,zhubr,zhubr2} for example, in a suitable case, the topology and the differentiable structure of the manifold $M_{G,5,1}$ is uniquely defined.
\end{Thm}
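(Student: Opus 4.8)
The plan is to realize $M_{G,n,m-n}$ as the source manifold of a product-organized special generic map obtained by feeding the manifold of Proposition \ref{prop:10} into Proposition \ref{prop:7} (so in particular $n \ge 5$, as Proposition \ref{prop:10} requires). First I would take $\bar{N}_{G,n}$ from Proposition \ref{prop:10}: an $n$-dimensional compact, connected, simply-connected manifold smoothly embedded in ${\mathbb{R}}^n$, with connected boundary, with $H_{n-3}(\bar{N}_{G,n};\mathbb{Z}) \cong G$ and all other homology groups trivial. Applying Propositions \ref{prop:7} and \ref{prop:8} to the embedding $\bar{f}_N \colon \bar{N}_{G,n} \hookrightarrow {\mathbb{R}}^n$ produces a closed, connected smooth manifold $M := M_{G,n,m-n}$ together with a product-organized special generic map $f := f_{G,n,m-n} \colon M \to {\mathbb{R}}^n$ for which $W_f = \bar{N}_{G,n}$, $\bar{f} = \bar{f}_N$ is an embedding, and both bundles of Proposition \ref{prop:2} are trivial. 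Since $\bar{f}$ is an embedding and $q_f$ restricts to a diffeomorphism from $S(f)$ onto $\partial W_f$, the restriction $f|_{S(f)} = \bar{f}|_{\partial W_f} \circ q_f|_{S(f)}$ is an embedding onto a submanifold diffeomorphic to $\partial \bar{N}_{G,n}$, and $W_{f_{G,n,m-n}} = \bar{N}_{G,n}$; this is property (\ref{thm:4.2}). Property (\ref{thm:4.1}) is exactly Proposition \ref{prop:9} for the product-organized map $f$.

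It remains to prove property (\ref{thm:4.3}). Simple-connectivity of $M$ is immediate from Proposition \ref{prop:4}: since $m-n \ge 1$, the homomorphism ${q_f}_\ast \colon \pi_1(M) \to \pi_1(W_f) = \pi_1(\bar{N}_{G,n})$ is an isomorphism and $\bar{N}_{G,n}$ is simply-connected. For homology, Proposition \ref{prop:4} gives $H_j(M;\mathbb{Z}) \cong H_j(\bar{N}_{G,n};\mathbb{Z})$ for $0 \le j \le m-n$. For the remaining degrees I would exploit the concrete model furnished by the triviality of the two bundles: $M$ is the union of $\partial\bar{N}_{G,n} \times D^{m-n+1}$ and $\bar{N}_{G,n} \times S^{m-n}$ glued along $\partial\bar{N}_{G,n} \times S^{m-n}$. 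Inserting into the Mayer--Vietoris sequence the K\"unneth decompositions of these three pieces, together with $H_\ast(\bar{N}_{G,n};\mathbb{Z})$ and the homology of $\partial\bar{N}_{G,n}$ (computed from the Poincar\'e--Lefschetz long exact sequence of the pair $(\bar{N}_{G,n},\partial\bar{N}_{G,n})$), one reads off $H_j(M;\mathbb{Z})$ degree by degree; the connecting homomorphisms are controlled by whether the fundamental class of the $S^{m-n}$ factor survives under the relevant inclusions and by the inclusion-induced maps $H_\ast(\partial\bar{N}_{G,n};\mathbb{Z}) \to H_\ast(\bar{N}_{G,n};\mathbb{Z})$, both of which are transparent. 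The outcome is $\mathbb{Z}$ in degrees $0$ and $m$, $G$ in degrees $n-3$ and $m-n+2$, and the trivial group otherwise; the copy of $G$ in degree $m-n+2$ is the Poincar\'e--Lefschetz (torsion-linking) partner of the one in degree $n-3$, consistent with $(n-3)+(m-n+2)=m-1$. Equivalently, one can argue on the bounding manifold $W$ of Proposition \ref{prop:3}, which collapses to an $(n-1)$-dimensional polyhedron, via the long exact sequence of the pair $(W,M)$ with $H_j(W,M;\mathbb{Z}) \cong H^{m+1-j}(W_f;\mathbb{Z})$ and Poincar\'e duality for the closed oriented manifold $M$.

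The step I expect to be the main obstacle is precisely this homology bookkeeping: one must identify the connecting homomorphisms accurately enough to exclude spurious free summands or extra torsion, and one must handle the degenerate configurations in which some of the degrees $0$, $n-3$, $m-n+2$, $m$ coincide (this is part of why $m \ge n+1$ and $n \ge 5$ are imposed; in the borderline case $n=5$, where $\bar{N}_{G,5}$ already has nontrivial middle homology, one needs the explicit structure of $\bar{N}_{G,5}$ coming from the proof of Proposition \ref{prop:10}). Once the homology groups and the vanishing of the characteristic classes are secured, the final assertion is immediate: $M_{G,5,1}$ is the six-dimensional case, with $H_0 \cong H_6 \cong \mathbb{Z}$, $H_2 \cong H_3 \cong G$ and the other homology groups trivial, and with vanishing Stiefel--Whitney and Pontrjagin classes, so that its homeomorphism and diffeomorphism type is pinned down by the classification results for closed, simply-connected $6$-manifolds in \cite{jupp,wall2,zhubr,zhubr2}.
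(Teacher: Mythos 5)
Your construction of $M_{G,n,m-n}$ and the verification of properties (\ref{thm:4.1}) and (\ref{thm:4.2}) coincide with the paper's (Proposition \ref{prop:10} fed into Propositions \ref{prop:7}, \ref{prop:8} and \ref{prop:9}), but your route to the homology computation (\ref{thm:4.3}) is genuinely different. You propose a direct Mayer--Vietoris calculation on the explicit decomposition $M=(\partial\bar{N}_{G,n}\times D^{m-n+1})\cup_{\partial\bar{N}_{G,n}\times S^{m-n}}(\bar{N}_{G,n}\times S^{m-n})$ available because both bundles are trivial; this works (I checked a representative case) but forces you to first compute $H_{\ast}(\partial\bar{N}_{G,n};\mathbb{Z})$ and the inclusion-induced maps into $H_{\ast}(\bar{N}_{G,n};\mathbb{Z})$, and in the borderline case $n=5$ the map $H_{2}(\bar{N}_{G,5};\mathbb{Z})\rightarrow H_{2}(\bar{N}_{G,5},\partial\bar{N}_{G,5};\mathbb{Z})$ between two copies of $G$ is not determined by the statement of Proposition \ref{prop:10} alone, so you would indeed have to open up its proof, as you anticipate. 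The paper avoids all of this: it first sharpens Proposition \ref{prop:4} to an isomorphism ${q_{f_{G,n,m-n}}}_{\ast}:H_j(M_{G,n,m-n};\mathbb{Z})\rightarrow H_j(W_{f_{G,n,m-n}};\mathbb{Z})$ for $0\leq j\leq m-n+1$ (one degree better than the generic range, because Proposition \ref{prop:10} makes $W_{f_{G,n,m-n}}$ collapse to an $(n-2)$-dimensional polyhedron rather than an $(n-1)$-dimensional one --- a point your sketch misses, since you only invoke the range $0\leq j\leq m-n$), then covers the complementary degrees by Poincar\'e duality for the closed manifold $M_{G,n,m-n}$, and handles the one or two remaining middle degrees via the exact sequence of the pair $(W,M)$ together with Lefschetz duality for $W_{f_{G,n,m-n}}$, where the relevant relative groups are immediately seen to be trivial or finite; this is essentially the ``equivalent'' alternative you mention in passing. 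In short, your approach buys a completely explicit hands-on computation at the cost of extra bookkeeping on the boundary $\partial\bar{N}_{G,n}$ and its inclusion maps, while the paper's duality argument is shorter and never needs the homology of the boundary; to make your primary route airtight you would need to supply the $n=5$ linking-form computation and the degree-coincidence cases you correctly flag as the main obstacles.
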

\begin{proof}
	
	We have a product-organized special generic map $f_{G,n,m-n}:M_{G,n,m-n} \rightarrow {\mathbb{R}}^n$ on a suitable closed and simply-connected manifold $M_{G,n,m-n}$ enjoying (\ref{thm:4.1}) and (\ref{thm:4.2}) here by Propositions \ref{prop:7}, \ref{prop:8} and \ref{prop:9}
	with Proposition \ref{prop:4}.
	
	It is sufficient to show (\ref{thm:4.3}) here.
	
	$W_{f_{G,n,m-n}}$ is simply-connected and by Proposition \ref{prop:10} collapses to an ($n-2$)-dimensional polyhedron.
	By revising the proof of Propositions \ref{prop:4} and \ref{prop:6}, we can see that ${q_{f_{G,n,m-n}}}_{\ast}:H_j(M_{G,n,m-n};\mathbb{Z}) \rightarrow H_j(W_{f_{G,n,m-n}};\mathbb{Z})$ is an isomorphism for $0 \leq j \leq m-n+1$.
	In the present proof, "Proposition \ref{prop:4}" means this isomorphism. \\
	 \\
	 Hereafter, $W_{G,n,m-n}$ denotes "$W$ in Propositions \ref{prop:2} and \ref{prop:3}" where $M_{G,n,m-n}$ is used instead of "$M$ in these propositions". This is ($m+1$)-dimensional, closed and simply-connected.
	 
		We also have a homology exact sequence
	$$\rightarrow H_{j+1}(W_{G,n,m-n},M_{G,n,m-n};\mathbb{Z}) \rightarrow H_j(M_{G,n,m-n};\mathbb{Z}) \rightarrow H_j(W_{G,n,m-n};\mathbb{Z}) \rightarrow $$
\quad $H_j(W_{G,n,m-n},M_{G,n,m-n};\mathbb{Z}) \rightarrow $ \\
\ \\
	for $(W_{G,n,m-n},M_{G,n,m-n})$.\\
	\ \\
	Case 1 The case $m-n+1 \geq \frac{m}{2}$. \\
	 The resulting manifold $M_{G,n,m-n}$ is a desired manifold
	 by applying Proposition \ref{prop:4} and Poincar\'e duality theorem. \\
	\ \\
	Case 2 The case $m$ is even and $m-n+2=\frac{m}{2}$.\\
	\\
		We also have a homology exact sequence
	$$\rightarrow H_{n-2}(W_{G,n,m-n},M_{G,n,m-n};\mathbb{Z}) \rightarrow H_{n-3}(M_{G,n,m-n};\mathbb{Z}) \rightarrow H_{n-3}(W_{G,n,m-n};\mathbb{Z}) \rightarrow $$
  $H_{n-3}(W_{G,n,m-n},M_{G,n,m-n};\mathbb{Z}) \rightarrow $ \\
  \ \\
	for $(W_{G,n,m-n},M_{G,n,m-n})$. $H_{n-2}(W_{G,n,m-n},M_{G,n,m-n};\mathbb{Z})$ is isomorphic to $H^{m-n+3}(W_{G,n,m-n};\mathbb{Z})$, $H^{m-n+3}(W_{f_{G,n,m-n}};\mathbb{Z})$, and $H_{2n-m-3}(W_{f_{G,n,m-n}},\partial W_{f_{G,n,m-n}};\mathbb{Z})$ by Proposition \ref{prop:3} and by virtue of Poincar\'e duality theorem for $W_{f_{G,n,m-n}}$. We have the relation $2n-m-4=0$ and the previous groups are isomorphic to $H_{1}(W_{f_{G,n,m-n}},\partial W_{f_{G,n,m-n}};\mathbb{Z})$. This is the trivial group since $W_{f_{G,n,m-n}}$ is simply-connected and $\partial W_{f_{G,n,m-n}}$ is connected. This is shown by the homology exact sequence for $(W_{f_{G,n,m-n}},\partial W_{f_{G,n,m-n}})$. 
	Similarly, $H_{n-1}(W_{G,n,m-n},M_{G,n,m-n};\mathbb{Z})$ is isomorphic to $H_{2}(W_{f_{G,n,m-n}},\partial W_{f_{G,n,m-n}};\mathbb{Z})$ and $H^{n-2}(W_{f_{G,n,m-n}};\mathbb{Z})$ by Proposition \ref{prop:3} and by virtue of Poincar\'e duality theorem for $W_{f_{G,n,m-n}}$. This group is shown to be finite by Proposition \ref{prop:10} and universal coefficient theorem. Similarly, $H_{n-2}(W_{G,n,m-n},M_{G,n,m-n};\mathbb{Z})$ is isomorphic to $H_{0}(W_{f_{G,n,m-n}},\partial W_{f_{G,n,m-n}};\mathbb{Z})$ and the trivial group. We have the relation $n-2=\frac{m}{2}$. By Proposition \ref{prop:4}, Proposition \ref{prop:10} and Poincar\'e duality theorem for $M_{G,n,m-n}$, this completes the proof for this case. \\
		\ \\
	Case 3 The case $m$ is odd and $m-n+\frac{3}{2}=\frac{m}{2}$.\\
	\\ We also have a homology exact sequence
	$$\rightarrow H_{n-2}(W_{G,n,m-n},M_{G,n,m-n};\mathbb{Z}) \rightarrow H_{n-3}(M_{G,n,m-n};\mathbb{Z}) \rightarrow H_{n-3}(W_{G,n,m-n};\mathbb{Z}) \rightarrow $$
$H_{n-3}(W_{G,n,m-n},M_{G,n,m-n};\mathbb{Z}) \rightarrow $ \\
\ \\
	for $(W_{G,n,m-n},M_{G,n,m-n})$. $H_{n-2}(W_{G,n,m-n},M_{G,n,m-n};\mathbb{Z})$ is isomorphic to $H^{m-n+3}(W_{G,n,m-n};\mathbb{Z})$, $H^{m-n+3}(W_{f_{G,n,m-n}};\mathbb{Z})$, and $H_{2n-m-3}(W_{f_{G,n,m-n}},\partial W_{f_{G,n,m-n}};\mathbb{Z})$ by Proposition \ref{prop:3} and by virtue of Poincar\'e duality theorem for $W_{f_{G,n,m-n}}$. We have the relation $2n-m-3=0$ and the previous groups are isomorphic to $H_{0}(W_{f_{G,n,m-n}},\partial W_{f_{G,n,m-n}};\mathbb{Z})$ and the trivial group. We have the relation $n-2=\frac{m-1}{2}$. By Proposition \ref{prop:4}, Proposition \ref{prop:10}  and Poincar\'e duality theorem for $M_{G,n,m-n}$, this completes the proof for this case. \\
	\ \\	
	This completes the proof of (\ref{thm:4.3}). \\
	This completes the proof.
	\end{proof}
\begin{Rem}
	\cite{nishioka} has studied Case 3 of the proof of Theorem \ref{thm:4}. More precisely, for a positive integer $k>0$ and  a ($2k+1$)-dimensional closed and connected manifold $M$ such that the group $H_1(M;\mathbb{Z})$ is the trivial group admitting a special generic map into ${\mathbb{R}}^{k+2}$, the group $H_k(M;\mathbb{Z})$ has been shown to be free. The case $(m,n)=(5,4)$ of Theorem \ref{thm:3} concerns the case $k=2$ here.
\end{Rem}

\begin{proof}
[A proof of Main Theorem \ref{mthm:1}]
First consider an $m^{\prime}$-dimensional closed and simply-connected manifold $M_{G,5,m^{\prime}-5}$ in Theorem \ref{thm:4} where $m^{\prime}$ denotes "$m$ of Theorem \ref{thm:4}". We can embed this smoothly into ${\mathbb{R}}^{m^{\prime}+1}$ by Proposition \ref{prop:7} for example.

We define $M$ as $M:=M_{G,5,m^{\prime}-5} \times S^{m-m^{\prime}}$. 
This completes our exposition on the properties (\ref{mthm:1.1}) and (\ref{mthm:1.2}).

By Propositions \ref{prop:2}, \ref{prop:4} and \ref{prop:5} for example, this does not admit special generic maps into ${\mathbb{R}}^n$ for $n=1,2,3,4$. By the previous argument, we can consider the product map of a Morse function with exactly two singular points on $S^{m-m^{\prime}}$ and the identity map on $M_{G,5,m^{\prime}-5}$ and embed the image smoothly in a suitable way into ${\mathbb{R}}^{m^{\prime}+1}$. This can be constructed as product-organized one. This implies the existence of special generic maps into ${\mathbb{R}}^n$ for $m^{\prime}<n \leq m$.

We prove the non-existence into  ${\mathbb{R}}^{m^{\prime}}$. Suppose that such a special generic map $f$ on $M$ into ${\mathbb{R}}^{m^{\prime}}$ exists. From Proposition \ref{prop:4}, the homomorphism ${q_f}_{\ast}$ maps the homology group $H_{m-m^{\prime}}(M;\mathbb{Z})$ onto $H_{m-m^{\prime}}(W_f;\mathbb{Z})$ as an isomorphism. 
$m^{\prime}-1$ is the dimension of the boundary $\partial W_f \subset W_f$ and the relation $a+(m^{\prime}-1) \leq m$ is assumed as a condition. We can apply Poincar\'e duality or intersection theory for $M$. From this,  
the torsion subgroup of $H_{a}(M;\mathbb{Z})$, which is not the trivial group from the assumption and K\"unneth theorem for the product $M_{G,5,m^{\prime}-5} \times S^{m-m^{\prime}}$, is mapped onto $H_{a}(W_f;\mathbb{Z})$ by the homomorphism ${q_f}_{\ast}$ and this is an isomorphism.

Here, we apply fundamental methods used first in \cite{kitazawa4}, generalizing some methods used in \cite{saeki1}, and used also in \cite{kitazawa5,kitazawa6} for example.

We choose an element of $h_1 \in H_{m-m^{\prime}}(M;\mathbb{Z})$ and an element of $h_2 \in H_{a}(M;\mathbb{Z})$ which are not the zero elements. Furthermore, we choose $h_2$ as an element of a summand of a direct sum decomposition of the torsion subgroup of $H_{a}(M;\mathbb{Z})$ into cyclic groups. This decomposition is due to fundamental theorem on the structures of finitely generated commutative groups. Note that $a=2,m^{\prime}-3$ and that $M$ is the product of a manifold $M_{G,5,m^{\prime}-5}$ in Theorem \ref{thm:4} and $S^{m-m^{\prime}}$.
By the isomorphisms before and Poincar\'e duality theorem for $W_f$, we have an element of $H_{m^{\prime}-(m-m^{\prime})}(W_f,\partial W_f;\mathbb{Z})$ which is not the zero element corresponding to the element ${q_f}_{\ast}(h_1)$, the value of the isomorphism presented before at $h_1$, in a canonical way uniquely. In a similar reason, there exists a suitable commutative group $G_{h_2}$ which is finite and cyclic and which is not the trivial group and we have an element of $H_{m^{\prime}-a}(W_f,\partial W_f;G_{h_2})$ which is not the zero element corresponding to the element of $H_{a}(W_f;G_{h_2})$, obtained by mapping $h_2$ by the isomorphism defined canonically by ${q_f}_{\ast}$ before and mapping by the canonically defined homomorphism defined naturally from the canonical quotient map from $\mathbb{Z}$ onto $G_{h_2}$, in a canonical way uniquely. For $h_1$, we consider the value of the canonically defined homomorphism defined naturally from the canonical quotient map from $\mathbb{Z}$ onto $G_{h_2}$ at the obtained element of $H_{m^{\prime}-(m-m^{\prime})}(W_f,\partial W_f;\mathbb{Z})$.
We put these two elements by ${h_{1,G_{h_2}}}^{\prime} \in H_{m^{\prime}-(m-m^{\prime})}(W_f,\partial W_f;G_{h_2})$ and ${h_{2,G_{h_2}}}^{\prime} \in H_{m^{\prime}-a}(W_f,\partial W_f;G_{h_2})$. By considering a (so-called {\it generic}) {\it intersection}, we have an element of $H_1(W_f;\partial W_f;G_{h_2})$. The degree $1$ is due to the relation $(m^{\prime}-(m-m^{\prime}))+(m^{\prime}-a)-m^{\prime}=2m^{\prime}-m-a=1$ with the assumption on the integers. 
This is the sum of elements represented by closed intervals embedded smoothly and properly in $W_f$. 
Furthermore, two boundary points of the each interval is mapped into distinct connected components of $\partial W_f$ and the interior is embedded into the interior of $W_f$.
Related to this, circles in (the interior of) $W_f$ are null-homotopic since $W_f$ is simply-connected.
By respecting the bundle whose projection is $r:W \rightarrow M$ in Proposition \ref{prop:3} where we abuse the notation and considering a variant of so-called {\it Thom isomorphisms} or {\it prism-operators}, we have an element of $H_{m-m^{\prime}+2}(W,M;G_{h_2})$. By considering the boundary, we have an element of $H_{m-m^{\prime}+1}(M;G_{h_2})$.
For this see \cite{saeki1} and for algebraic topological notions see \cite{hatcher,milnorstasheff} for example.
 
We go back to our arguments for the proof. If this element is not the zero element, then by fundamental arguments on Poincar\'e duality and intersection theory, this element must be obtained as an element which is the value of the canonically defined homomorphism induced naturally from the canonical quotient map from $\mathbb{Z}$ to $G_{h_2}$. By the structure of $M=M_{G,5,m^{\prime}-5} \times S^{m-m^{\prime}}$ and conditions on homology groups together with Poincar\'e duality and K\"unneth theorem for example, this is not the zero element and this is not an element which is the value of the canonically defined homomorphism induced naturally from the canonical quotient map from $\mathbb{Z}$ to $G_{h_2}$. 
We add that this homomorohism is one from $H_{m-m^{\prime}+1}(M;\mathbb{Z})$ into $H_{m-m^{\prime}+1}(M;G_{h_2})$ as a precise exposition.

We find a contradiction. We have (\ref{mthm:1.3}). This completes the proof.

\end{proof}
\begin{proof}[A proof of Main Theorem \ref{mthm:2}]

	$M_1$ is obtained as a connected sum of the following three manifolds taken in the smooth category. 
	\begin{itemize}
		\item A $8$-dimensional manifold 
	 admitting a product-organized special generic map into ${\mathbb{R}}^5$ in Theorem \ref{thm:4} by considering "$(m,n,G)=(8,5,G_{{\rm F},1})$ in the theorem". 
	 \item A $8$-dimensional manifold 
	 admitting a product-organized special generic map into ${\mathbb{R}}^6$ in Theorem \ref{thm:4} by considering "$(m,n,G)=(8,6,G_{{\rm F},2})$ in the theorem".
	 \item A manifold represented as a suitably chosen connected sum of manifolds diffeomorphic to $S^2 \times S^6$, $S^3 \times S^5$ or $S^4 \times S^4$, which admit special generic maps into ${\mathbb{R}}^6$ in Example \ref{ex:1}. The special generic maps discussed in Example \ref{ex:1} are also constructed as product-organized maps by fundamental arguments on special generic maps.
	 \end{itemize}
	We can see the non-existence of special generic map on $M_1$ into ${\mathbb{R}}^n$ for $n=1,2,3,4,5$ from Propositions \ref{prop:2}, \ref{prop:4}, \ref{prop:5} and Theorem \ref{thm:1} for example. 
	By a fundamental argument for construction of special generic maps in \cite{saeki1}, we can construct a product-organized special generic map on $M_1$ into ${\mathbb{R}}^6$.
	
	We give $M_2$. This is obtained as a connected sum of the following three manifolds taken in the smooth category.
	\begin{itemize}
		\item A $8$-dimensional manifold 
	admitting a product-organized special generic map into ${\mathbb{R}}^5$ in Theorem \ref{thm:4} by considering "$(m,n,G)=(8,5,G_{{\rm F},1})$ in the theorem". 
		\item 
		 A $8$-dimensional closed and simply-connected manifold "$M^{\prime} \times S^2$ in Main Theorem \ref{mthm:1}", having the 3rd homology group $H_3(M^{\prime} \times S^3;\mathbb{Z})$ isomorphic to $G_{{\rm F},2}$. More precisely, we consider the case "$(m,m^{\prime},a,G)=(8,6,3,G_{{\rm F},2})$ in Main Theorem \ref{mthm:1}".
		 \item A manifold represented as a suitably chosen connected sum of manifolds diffeomorphic to $S^2 \times S^6$, $S^3 \times S^5$ or $S^4 \times S^4$, which admit special generic maps into ${\mathbb{R}}^7$ in Example \ref{ex:1}. These maps can be constructed as product-organized maps as before.
		\end{itemize} 
		
We can see the non-existence of special generic maps on $M_2$ into ${\mathbb{R}}^n$ for $n=1,2,3,4,5,6$. For $n=1,2,3,4,5$, for example, Propositions \ref{prop:2}, \ref{prop:4}, \ref{prop:5}, \ref{prop:6} and Theorem \ref{thm:1} complete the proof. For $n=6$ here, we apply Main Theorem \ref{mthm:1} directly or revise arguments in the proof of Main Theorem \ref{mthm:1} suitably if we need. In the case ($m=8$ and) $n=6$, $m-n=8-6=2$ and we do not have any sequence of elements of a finite length of $H^j(M;A)$ with $0 \leq j \leq m-n=2$ the cup product for which is not the zero element whose degree is greater than or equal to $6$. This means that we cannot apply Proposition \ref{prop:6} to give a proof for the case $n=6$. We can show the existence of special generic maps into ${\mathbb{R}}^n$ for $n=7,8$ similarly.

This completes the proof.
\end{proof}

This adds to Main Theorem 4 of \cite{kitazawa2} where situations are a bit different.
Last, for example, our new examples are discovered especially motivated by \cite{kitazawa6}. Compare these results to our new results.


\end{document}